\newtheorem{thm}{Theorem}
\newtheorem{prop}[thm]{Proposition}
\newtheorem{lem}[thm]{Lemma}
\newcommand{\uzero}{\underline{0}}
\def\EE{\mathbb{E}}
\def\PP{\mathbb{P}}
\def\R{\mathbb{R}}
\begin{document}

\title{
The Boolean Model in the Shannon Regime:\\
Three Thresholds and Related Asymptotics }

\author{
Venkat Anantharam\\ 
{\em EECS Department}\\
{\em University of California, Berkeley}\\
ananth@eecs.berkeley.edu\\
 \and
Fran\c{c}ois Baccelli\\
{\em Department of Mathematics and ECE Department}\\
{\em University of Texas, Austin}\\
and\\
{\em INRIA/ENS},
{\em Paris, France}\\
francois.baccelli@austin.utexas.edu {\em or} francois.baccelli@ens.fr}

\maketitle

\begin{abstract}
Consider a family of Boolean models, indexed by integers $n \ge 1$, where 
the $n$-th model features a Poisson point process in $\R^n$
of intensity $e^{n \rho_n}$ with $\rho_n \to \rho$ as $n \to \infty$,
and balls of independent and identically distributed
radii distributed like $\bar X_n \sqrt{n}$,
with $\bar X_n$ satisfying a large deviations principle.
It is shown that there exist three deterministic thresholds:
$\tau_d$ the degree threshold; $\tau_p$ the percolation threshold;
and $\tau_v$ the volume fraction threshold;
such that asymptotically as $n$ tends to infinity,
in a sense made precise in the paper:
(i) for $\rho < \tau_d$, almost every point is isolated, namely its ball
intersects no other ball; 
(ii) for $\tau_d< \rho< \tau_p$,
almost every ball intersects an infinite number of balls and
nevertheless there is no percolation;
(iii) for $\tau_p< \rho< \tau_v$,
the volume fraction is 0 and nevertheless percolation occurs;
(iv) for $\tau_d< \rho< \tau_v$,
almost every ball intersects an infinite number of balls and
nevertheless the volume fraction is 0; 
(v) for $\rho > \tau_v$, the whole space covered. 
The analysis of this asymptotic regime is motivated
by related problems in information theory, and may be of interest in other 
applications of stochastic geometry.
\end{abstract}

\section{Introduction}
The Boolean model was considered in high dimensions in a few papers,
both within the framework of stochastic geometry \cite{Gouere,PenroseCP}
and within the framework of information theory \cite{AB}.
The present paper discusses three thresholds and some
asymptotics related to these thresholds in a setting 
analogous to that in \cite{AB}, which is that where
the radii of the balls in the Boolean model scale with the dimension $n$
of the ambient space like ${\bar X_n} \sqrt{n}$,
where $({\bar X_n}, n \ge 1)$ is a sequence of random variables.
In this paper, we assume that this sequence of random variables
satisfies a large deviations principle (LDP). 

The first threshold is the volume fraction threshold, which
is the threshold at which the probability of coverage of the origin by the 
Boolean model switches from asymptotically vanishing to asymptotically approaching
$1$ as the dimension $n$ tends to $\infty$.
The second one is the percolation threshold; it was
first studied in detail in \cite{PenroseCP}
in the particular case where $\bar X_n$
is a constant. The case with random $\bar X_n$ was also discussed in
\cite{Gouere}. This is the threshold at which the probability of percolation in the 
Boolean model switches from asymptotically approaching $0$ to 
being asymptotically nonzero as $n \to \infty$.
The last is the degree threshold. This is the threshold at which the mean number of
grains of the Palm version of the Boolean model that intersect the grain of the 
origin switches from asymptotically being finite to asymptotically approaching 
$\infty$ as $n \to \infty$. It is not hard to see that these three thresholds 
are decreasing in the order in which they were presented.

The main new contributions of the present paper are (a)
representations of these three thresholds
in terms of optimization problems based on the rate function
of the LDP and (b) explicit asymptotics for various rates
of convergence in the neighborhood  of these thresholds.

\section{Setup}			\label{s.setup}

In each dimension $n \ge 1$ we have a homogeneous Poisson process of
intensity $e^{n \rho_n}$ (i.e. normalized logarithmic intensity $\rho_n$).
Assume that 
$\rho_n \to \rho$ as $n \to \infty$. Note that $\rho$ is a real number
(which can be negative). It is called the {\em asymptotic normalized
logarithmic intensity}
of this sequence of Poisson processes.
We will assume that the processes are defined on a single
probability space $(\Omega,{\cal F},\PP)$, although the coupling between the
different dimensions is not relevant for the issues we consider.
We will denote by $\PP^0_n$ the Palm probability of the Poisson point
process in dimension $n$.
See \cite[Chapter 13]{DVJ2} for the definition of and basic facts 
about Palm probabilities.

To each point $T^{(k)}_n$, $k \ge 1$, of the Poisson process in 
dimension $n$ (enumerated in some
way) we associate a mark $X^{(k)}_n\in \R^+$.
The $X^{(k)}_n$, $k \ge 1$, are assumed to be independent and 
identically distributed (i.i.d.) and
independent of the points.
For each dimension $n$, let $X^{(k)}_n \stackrel{d}{=} \bar{X}_n$ for all $k$.
Let $R^*_n$ denote $\EE[ \bar{X}_n]$, and let
$R^* := \lim_{n \to \infty} R^*_n$,  where
the last limit is assumed to exist. We assume that $0 < R^* < \infty$.
We assume 
that the sequence $(\bar{X}_n, n \ge 1)$ satisfies an LDP,
with good and convex rate function $I(\cdot)$ \cite{DZ}, 
e.g. by assuming the G\"artner-Ellis conditions. 
We also assume that the
following condition holds:
\begin{eqnarray}
\limsup_{n\to \infty} \EE [ (\bar X_n)^{\gamma n}]^{\frac 1 n} < \infty
\quad \mbox{for some $\gamma>1$}.\label{cond1}
\end{eqnarray}
By the deterministic setting we mean that
$\bar{X}_n$ is deterministic
and equal to $R^*_n$ for each $n \ge 1$, with $R^*_n \to R^*$ as 
$n \to \infty$. The deterministic setting is a special case of the general 
setting, but we will separately highlight the results in this case, since it is of
particular interest.

To the marked point process in dimension $n$, we associate
a Boolean model where the grain of point $T^{(k)}_n$ is a 
closed ball of radius $X^{(k)}_n \sqrt{n}$. Let
\begin{equation}
\label{eq:defbool}
 {\cal C}_n := \bigcup_k B(T^{(k)}_n, X^{(k)}_n \sqrt{n})
\end{equation}
denote this Boolean model, with $B(t,r)$ denoting the closed ball of center
$t\in \R^n$ and radius $r\ge 0$. Here, and in the rest of the paper, 
$:=$ denotes equality by definition.

From Slivnyak's theorem \cite[Chapter 13]{DVJ2},
the Palm version of the process in each dimension $n$ (i.e. its
law under $\PP^0_n$) is equivalent in law to 
the superposition of a stationary version of the process
and a process with a single point at the origin carrying a
ball with radius having law $\bar{X}_n \sqrt{n}$, and
independent of the stationary version (which is called the
reduced process of the Palm version). 

Our motivations for the analysis of this setting came from related problems
in information theory that we studied recently \cite{AB}. 
More specifically, in the study of 
error probabilities for coding over an additive white Gaussian 
noise channel \cite[Section 7.4]{Gallager}, it is natural to consider
a sequence of Poisson processes, one in each dimension $n \ge 1$, with 
well defined asymptotic logarithmic intensity, as was done in \cite{AB}, motivated by the 
ideas in \cite{P94}. The error exponent questions studied in \cite{AB} are 
related to 
\footnote{The scenario considered in \cite{AB} goes beyond additive white 
Gaussian noise to consider a setting where the additive
noise comes from sections of a stationary and ergodic process. The Boolean models that arise in the more general case involve grains, defined by the typicality sets of the 
noise process, that are not necessarily spherically symmetric.
Even more generally, in \cite{AB} the underlying point process in each dimension is 
allowed to be an arbitrary stationary ergodic process (while still requiring the 
existence of an asymptotic logarithmic intensity).}
the consideration of a Boolean model where the grains associated 
with the individual points are defined in terms of additive white Gaussian noise:
for all $n \ge 1$ and $k \ge 1$, 
let $W_n^{(i,k)}$, $n\ge i\ge 1$, denote an i.i.d. sequence of
Gaussian random variables, all centered and of variance $\sigma^2 > 0$.
Let $W_n^{(k)}$ denote the $n$-dimensional vector with coordinates
$W_n^{(i,k)}$, $n\ge i\ge 1$. Then $T_n^{(k)}+W_n^{(k)}$ 
belongs to the closed ball of center $T_n^{(k)}$ and radius 
$X_n^{(k)} \sqrt{n},$ with
$$ X_n^{(k)} := \left(\frac 1 n 
\sum_{i=1}^n \left(W_n^{(i,k)}\right)^2\right)^{\frac 1 2}$$
satisfying an LDP and all the assumptions listed above.
For the error exponent problem what is of interest 
is not this Boolean model, but the related Boolean model where 
the grain associated to each point is not the random ball described above, 
but rather an associated
{\em typicality region}, which in this case we may define as
the set
\[
\{ T_n^{(k)} + v ~:~ v \in \mathbb{R}^n,~
\| v \|_2 \le \sigma \sqrt{n} + \alpha_n \}~,
\]
where $\| v \|_2$ denotes the usual Euclidean length of $v$ and 
$0 < \alpha_n = O(\sqrt{n})$ are chosen such that 
\footnote{It is straightforward to check that it is possible to choose
$(\alpha_n, n \ge 1)$ satisfying these requirements.}
\begin{eqnarray*}
&& ~ \frac{\alpha_n}{\sqrt{n}} \to 0 \mbox{ as $n \to \infty$};\\ 
&&P( \| W_n^{(k)} \| \le 
\sigma \sqrt{n} + \alpha_n ) \to 1 \mbox{ as $n \to \infty$ 
(for each $1 \le k \le n$, of course)};\\ 
&& 
\frac{1}{n} \log \mbox{Vol} \{ v \in \mathbb{R}^n ~:~
\| v \|_2 \le \sigma \sqrt{n} + \alpha_n \} \to \frac{1}{2} \log (2 \pi e \sigma^2)
\mbox{ as $n \to \infty$}~.
\end{eqnarray*}
This fits within the class of deterministic Boolean models considered in this 
paper. Thus, having carried out the analysis in \cite{AB}, it was natural for us
to become
curious about the asymptotic properties in dimension of the
sequence of Boolean models with the grains being balls whose radii obey
a large deviations principle in the sense described above, 
and the current paper may be viewed as a start in 
that direction. In particular, it is to be hoped that this particular asymptotic regime,
which is so natural from an information theoretic viewpoint, will also be 
of value in the applications of stochastic geometry in other domains of science
and engineering.

The paper is structured as follows.
We start with a summary of results and heuristic explanations
in Section \ref{sec:res}.
We then give proofs in Section \ref{sec:proofs}.
For smoothness of exposition, we first discuss the volume fraction threshold, then the 
degree threshold, and finally the percolation threshold in each of these sections.
Some concluding remarks, making connections between the issues addressed here and
the information theoretic concerns of \cite{AB}, are made in Section 
\ref{s.concluding}, where in particular the instantiation of our general results in the
case of Gaussian grains is worked out in detail.


\section{Results}
\label{sec:res}

\subsection{Volume  Fraction  Threshold}

Consider the stationary version of the marked Poisson process
in each dimension. We are 
interested in the asymptotic behavior of the probability with which 
the origin is covered, namely $ \PP( \uzero \in \mathcal{C}_n)$.
 By stationarity, for any Borel set of $\R^n$, this probability
is also the mean fraction of the volume of the Borel set which is
covered by the Boolean model. 
We claim that there is a number $\tau_v$, 
called the {\em volume  fraction  threshold},
such that for $\rho < \tau_v$ this probability 
asymptotically approaches $0$ as $n$ tends to infinity, while for $\rho > \tau_v$ it 
asymptotically approaches $1$. The value of $\tau_v$ depends on the 
large deviations rate function $I(\cdot)$ associated to the sequence
of distributions of the radii of the marks.

The idea of the proof is based on the fact that
most of the volume of a ball is at the boundary. Hence  for
all $R>0$, 
the  mean  number of points at distance roughly
$R \sqrt{n}$ from the origin grows like 
\[
e^{n \rho_n} e^{ \frac{n}{2} \log ( 2 \pi e) + o(n)} e^{n \log R}~.
\]

Each such point covers the origin with probability $\PP(\bar{X}_n \ge R)$. 
For $R < R^*$ this probability is asymptotically $1$. For $R > R^*$ this probability 
decays like $e^{- n I(R) + o(n)}$, where $I(\cdot)$ denotes the rate function for 
the convergence $\bar{X}_n \stackrel{\PP}{\to} R^*$.

Let $\uzero$ denote the origin
in $\mathbb{R}^n$. We should therefore have
\[
\lim_{n \to \infty} \frac{1}{n} \log \PP( \uzero \in \mathcal{C}_n) = 
\rho + \frac{1}{2} \log ( 2 \pi e) + \sup_{R \ge R^*} (\log R - I(R))~,
\]
as long as 
\[
\rho + \frac{1}{2} \log ( 2 \pi e) + \sup_{R \ge R^*} (\log R - I(R)) < 0~,
\]
 where we used the fact that $I(R^*)=0$ which
implies that $\log(R)\le \log(R^*)-I(R^*)$ for $R\le R^*$.

Also
\[
\lim_{n \to \infty} \PP( \uzero \in \mathcal{C}_n) = 1~,
\]
if
\[
\rho + \frac{1}{2} \log ( 2 \pi e) + \sup_{R \ge R^*} (\log R - I(R)) > 0~.
\]

This gives a heuristic explanation of the value of
the threshold in the following theorem:

\begin{thm}\label{thm:vf}
Under the foregoing assumptions,
the volume fraction 
threshold is equal to
\begin{equation}			\label{Thresh.Vol}
\tau_v = - \frac{1}{2} \log ( 2 \pi e) + \inf_{R \ge R^*} (I(R) - \log R)~.
\end{equation}
More precisely, for $\rho < \tau_v$, 
as $n$ tends to infinity,
the volume fraction in dimension $n$, namely 
$\PP( \uzero \in \mathcal{C}_n)$, tends to 0 exponentially fast with
\begin{equation}
\label{eq:vfexp}
\lim_{n\to \infty} \frac 1 n
\log(\PP( \uzero \in \mathcal{C}_n)) =\rho-\tau_v,
\end{equation}
whereas for $\rho > \tau_v$, it tends to 1 with
\begin{equation}                \label{Limit.Above}
\lim_{n \to \infty} \frac{1}{n}
\log(- \log \PP( \uzero \notin \mathcal{C}_n)) 
= 
\rho - \tau_v~. 
\end{equation}
\end{thm}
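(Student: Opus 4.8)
The plan is to analyze $\PP(\uzero \in \mathcal{C}_n)$ directly via the void probability of the Poisson process, exploiting the fact that coverage of the origin is determined by the set of points whose grains reach the origin. A point $T$ with mark $X$ covers $\uzero$ iff $\|T\| \le X\sqrt{n}$, i.e. iff $T$ lies in the ball $B(\uzero, X\sqrt{n})$. Conditioning on the marks, the number of points covering the origin is Poisson with mean $e^{n\rho_n}\,\EE[\omega_n (\bar X_n\sqrt n)^n]$, where $\omega_n$ is the volume of the unit ball in $\R^n$; hence
\begin{equation}
\label{eq:covmean}
\PP(\uzero \notin \mathcal{C}_n) = \exp\!\left(-e^{n\rho_n}\,\omega_n\, n^{n/2}\,\EE[(\bar X_n)^n]\right)~.
\end{equation}
So everything reduces to estimating $\Lambda_n := e^{n\rho_n}\,\omega_n\, n^{n/2}\,\EE[(\bar X_n)^n]$ on the exponential scale. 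Using $\frac1n\log(\omega_n n^{n/2}) \to \frac12\log(2\pi e)$ (Stirling) and $\rho_n \to \rho$, the task is to show $\frac1n \log \EE[(\bar X_n)^n] \to \sup_{R\ge R^*}(\log R - I(R))$.

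The key step is a Varadhan-type / Laplace-principle estimate for $\frac1n\log\EE[(\bar X_n)^n]$. First I would write $\EE[(\bar X_n)^n] = \EE[e^{n\log \bar X_n}]$ and apply Varadhan's lemma to the LDP for $(\bar X_n)$ with rate function $I$: the candidate limit is $\sup_{R\ge 0}(\log R - I(R))$, which equals $\sup_{R\ge R^*}(\log R - I(R))$ since $\log R \le \log R^* - I(R)$ for $R \le R^*$ (as noted in the excerpt, using $I(R^*)=0$ and monotonicity of $\log$). The lower bound half of Varadhan is immediate from the LDP lower bound applied to small neighborhoods of a near-optimal $R$. For the upper bound, $\log R$ is unbounded as $R\to\infty$, so the standard Varadhan upper bound does not apply verbatim; this is exactly where condition \eqref{cond1} enters. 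I would split $\EE[(\bar X_n)^n] = \EE[(\bar X_n)^n \mathbf{1}_{\bar X_n \le M}] + \EE[(\bar X_n)^n \mathbf{1}_{\bar X_n > M}]$; the first term is handled by the compact-case Varadhan upper bound on $[0,M]$, and the second is controlled by Hölder with exponent $\gamma>1$ from \eqref{cond1}: $\EE[(\bar X_n)^n \mathbf{1}_{\bar X_n>M}] \le \EE[(\bar X_n)^{\gamma n}]^{1/\gamma}\,\PP(\bar X_n > M)^{1-1/\gamma}$, where the large deviations bound on $\PP(\bar X_n > M)$ makes this term negligible on the exponential scale once $M$ is large enough (using that $I$ is a good rate function, so $I(R)\to\infty$, and that $\limsup \frac1n\log\EE[(\bar X_n)^{\gamma n}] < \infty$). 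Letting $M\to\infty$ closes the upper bound.

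Given $\frac1n\log\Lambda_n \to \rho + \frac12\log(2\pi e) + \sup_{R\ge R^*}(\log R - I(R)) = \rho - \tau_v$, the two regimes follow from \eqref{eq:covmean}. If $\rho < \tau_v$, then $\Lambda_n = e^{n(\rho-\tau_v)+o(n)} \to 0$, so $\PP(\uzero\notin\mathcal{C}_n) = e^{-\Lambda_n} = 1 - \Lambda_n + o(\Lambda_n)$, giving $\PP(\uzero\in\mathcal{C}_n) \sim \Lambda_n$ and hence $\frac1n\log\PP(\uzero\in\mathcal{C}_n) \to \rho - \tau_v$, which is \eqref{eq:vfexp}. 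If $\rho > \tau_v$, then $\Lambda_n \to \infty$ like $e^{n(\rho-\tau_v)+o(n)}$, so $\PP(\uzero\in\mathcal{C}_n)\to 1$ and $-\log\PP(\uzero\notin\mathcal{C}_n) = \Lambda_n$, so $\frac1n\log(-\log\PP(\uzero\notin\mathcal{C}_n)) \to \rho-\tau_v$, which is \eqref{Limit.Above}. Finally, the identity $\tau_v = -\frac12\log(2\pi e) + \inf_{R\ge R^*}(I(R)-\log R)$ in \eqref{Thresh.Vol} is just the sign-flipped restatement of the sup computed above.

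I expect the main obstacle to be the Varadhan upper bound with the unbounded test function $\log R$: one must verify carefully that \eqref{cond1} together with goodness of $I$ suffices to tame the tail contribution uniformly in $n$, and that the truncation parameter $M$ can be sent to infinity after the exponential-scale limit without losing the optimal exponent. A secondary technical point is making the Stirling estimate $\frac1n\log(\omega_n n^{n/2}) \to \frac12\log(2\pi e)$ rigorous with the correct $o(n)$ control, but this is routine. Everything else — the Poisson void-probability identity and the elementary analysis of $e^{-\Lambda_n}$ in the two regimes — is straightforward.
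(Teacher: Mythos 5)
Your proposal is correct and follows essentially the same route as the paper: represent $\PP(\uzero\notin\mathcal{C}_n)$ as the void probability $\exp(-\Lambda_n)$ of the Poisson-distributed count of covering points, reduce to the exponential asymptotics of $\EE[(\bar X_n)^n]$, and apply Varadhan's lemma with condition (\ref{cond1}) supplying the required moment control before analyzing $e^{-\Lambda_n}$ in the two regimes. The only organizational difference is that the paper routes the computation of the mean covering count through the auxiliary directed graph and the mass-transport identity $\EE_n^0[d_n^+]=\EE_n^0[d_n^-]$ (machinery it reuses for the degree threshold), whereas you compute the mean directly; your explicit truncation-plus-H\"older argument for the Varadhan upper bound is just the standard proof of the moment condition that the paper invokes as a black box via \cite{DZ}, so both are fine.
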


Note that 
\begin{equation}
\label{eq:dettauv}
\tau_v \le - \frac{1}{2} \log ( 2 \pi e) - \log R^*.
\end{equation}
In the case of deterministic radii, i.e. 
when $\bar{X}_n$ equals the deterministic value $R^*_n$ for each
$n \ge 1$, with $R^*_n \to R^*$ as $n \to \infty$,
we have equality in eqn. (\ref{eq:dettauv}).
The R.H.S. of eqn. (\ref{eq:dettauv}) is identical 
to what is called the Poltyrev
threshold in \cite{AB}, where it in effect arose
in the context of the Boolean 
models with Gaussian grains truncated to their typicality regions,
as described at the end of Section \ref{s.setup}.
In Section \ref{s.concluding} we will discuss 
in more depth this
connection between the questions addressed in this paper and the information 
theoretic questions studied in \cite{AB}.
This threshold can also be described as
follows: the volume of the $n$-ball of random radius ${\bar X_n} \sqrt{n}$ 
scales like $e^{nV + o(n)}$ as $n$ tends to infinity, for some constant $V$; we have
$\tau_v=- V$ or equivalently the critical density
$e^{n\tau_v + o(n)}$ scales like the inverse of the volume of this $n$-ball.

\subsection{ Degree  Threshold}

We are interested in the number $D_n$ of 
points other than $\uzero$
whose ball intersects the ball of the point at the origin
under $\PP^0_n$. 


We claim that there is a number $\tau_d$, 
that we will call the {\em 
degree threshold},  such that 
if $\rho < \tau_d$, then  $\EE^0_n [D_n]$
asymptotically goes to $0$ when $n$ tends to infinity,
while for $\rho > \tau_d$ it asymptotically goes to $\infty$.


We argue as follows:
condition on the radius of the ball of
the point at the origin, call it $s \sqrt{n}$. Every point that lands in the 
ball of radius $s \sqrt{n}$ will have its ball meeting the ball of the origin. 
The number of such points grows like
\[
e^{n  \rho_n} e^{\frac{n}{2} \log (2 \pi e) + o(n)} e^{n \log s}~.
\]
Next consider points at a distance roughly $R \sqrt{n}$ from the (point at the) origin,
 with $R>s$.  The 
number of such points grows like 
\[
e^{n \rho_n} e^{\frac{n}{2} \log (2 \pi e) + o(n)} e^{n \log R}~.
\]
Each such point has its ball intersecting the ball of the point at the origin with 
probability asymptotically equal to $1$ if $R - s < R^*$ and with probability 
decaying like $e^{ - n I(R - s) + o(n)}$ if $R - s > R^*$. The 
number of points meeting the ball of the origin, conditioned on this ball having 
radius $s \sqrt{n}$, therefore grows like
\begin{equation}
\label{eq:meanasy}
e^{n (\rho + \frac{1}{2} \log (2 \pi e) + \sup_{R \ge s + R^*} (\log R - I(R - s))) + o(n)}~.
\end{equation}
The probability that the ball of the origin has radius roughly $s \sqrt{n}$ decays like
$e^{ - n I(s) + o(n)}~.$
Thus, the overall growth rate of the number of points whose
ball meets the ball of the origin is
\begin{eqnarray}	\label{Boolean}
&&~\sup_{s > 0} \left(  - I(s)  + 
\rho + \frac{1}{2} \log (2 \pi e) + \sup_{R \ge s + R^*} (\log R - I(R - s)) \right)
\nonumber \\
&&~~~ = \rho + \frac{1}{2} \log (2 \pi e) + \sup_{s > 0} \sup_{R \ge s + R^*}
\left(  - I(s)  + \log R - I(R - s)\right)
\nonumber \\
&&~~~ = \rho + \frac{1}{2} \log (2 \pi e) + \sup_{R > R^*} \left( \log R +
\sup_{0 < s \le R - R^*} \left(  - I(s) - I(R - s) \right) \right)
\nonumber \\
&&~~~ \stackrel{(a)}{=} \rho + \frac{1}{2} \log (2 \pi e) +
\max \left(
\sup_{R^* \le R < 2R^*} \left( \log R  - I(R - R^*) \right)\right.,
\nonumber \\ 
& & \hspace{7cm}
\left. \sup_{R \ge 2R^*} \left( \log R - 2 I(\frac{R}{2}) \right) \right)~,
\nonumber\\
&&~~~ = \rho + \frac{1}{2} \log (2 \pi e) + \sup_{R \ge 2 R^*} 
\left( \log R - 2 I(\frac{R}{2}) \right)~,
\end{eqnarray}
where in step (a) we have used the convexity of the rate function $I(\cdot)$ and the 
fact that $I(R^*) = 0$, and 
in the last step we have observed that the maximum in the first of the terms
in the overall maximum occurs at $R =2R^*$.
This gives intuition for the value of the threshold in the following theorem:

\begin{thm}
\label{thm:deg}
Under the conditions of Theorem \ref{thm:vf},
the degree threshold is 
\begin{equation}		\label{Thresh.Boolean.Old}
\tau_d = - \frac{1}{2} \log (2 \pi e) + \inf_{R \ge 2 R^*} 
\left( 2 I(\frac{R}{2}) - \log R \right)~.
\end{equation}
That is,
for $\rho < \tau_d$,
as $n$ tends to infinity, in 
dimension $n$, $\EE_n^0 [D_n]$ tends to 0 exponentially fast,
whereas for $\rho > \tau_d$ it tends to infinity exponentially fast.
In both cases,
\begin{equation}
\label{eq:degexp}
\lim_{n\to \infty} \frac 1 n
\log (\EE_n^0 [D_n]) =
\rho- \tau_d~.
\end{equation}
\end{thm}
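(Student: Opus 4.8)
Because $D_n$ enters only through its \emph{mean}, no concentration argument is needed here: the plan is to reduce everything to a Laplace-type asymptotic for a single scalar expectation. First I would obtain an exact formula for $\EE^0_n[D_n]$. Under $\PP^0_n$, Slivnyak's theorem says the origin carries a ball of radius $X_0\sqrt n$ with $X_0\stackrel{d}{=}\bar X_n$, while the remaining points form a stationary Poisson process of intensity $e^{n\rho_n}$ carrying i.i.d.\ marks distributed like $\bar X_n$. A point at $t$ with mark $X\sqrt n$ contributes to $D_n$ iff $\|t\|\le (X_0+X)\sqrt n$, so by the refined Campbell (Mecke) formula,
\[
\EE^0_n[D_n] \;=\; e^{n\rho_n}\int_{\R^n}\PP\!\left(\|t\|\le (X_0+X)\sqrt n\right)dt \;=\; e^{n\rho_n}\,\omega_n\, n^{n/2}\,\EE\!\left[(X_0+X)^n\right],
\]
where $X_0,X$ are i.i.d.\ copies of $\bar X_n$ and $\omega_n$ is the volume of the unit ball of $\R^n$. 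Since $\tfrac1n\log(\omega_n n^{n/2})\to\tfrac12\log(2\pi e)$ by Stirling and $\rho_n\to\rho$, equation~(\ref{eq:degexp}) — and hence the rest of the statement — will follow once I establish
\[
\lim_{n\to\infty}\frac1n\log\EE\!\left[(X_0+X)^n\right] \;=\; \sup_{R\ge 2R^*}\Big(\log R - 2I\big(\tfrac R2\big)\Big).
\]

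To evaluate this limit, set $S_n:=X_0+X$. By independence, $(X_0,X)$ satisfies an LDP at speed $n$ with good rate function $(a,b)\mapsto I(a)+I(b)$; by the contraction principle applied to $(a,b)\mapsto a+b$, the sum $S_n$ satisfies an LDP with good rate function $I_2(r)=\inf_{a+b=r}\big(I(a)+I(b)\big)$, and since $a\mapsto I(a)+I(r-a)$ is convex and symmetric about $a=r/2$, this infimal convolution equals $I_2(r)=2I(r/2)$. I would then estimate $\EE[S_n^n]=\EE[e^{n\log S_n}]$ by Varadhan's integral lemma (equivalently, by a direct partition-into-thin-shells Laplace estimate). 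The lower bound $\liminf\ge \log R-2I(R/2)$, for each $R>0$, is immediate from the LDP lower bound applied to small neighbourhoods of $R$; the matching upper bound is the crux, and is exactly where hypothesis~(\ref{cond1}) enters. From $(X_0+X)^{\gamma n}\le 2^{\gamma n}(X_0^{\gamma n}+X^{\gamma n})$ and~(\ref{cond1}) one gets $\limsup_n\EE[S_n^{\gamma n}]^{1/n}<\infty$ for some $\gamma>1$, which verifies the moment condition of Varadhan's lemma; concretely, it lets me discard the event $\{S_n>M\}$ by H\"older against $\PP(S_n>M)\le e^{-n I_2(M)+o(n)}$ with $I_2(M)\to\infty$, and on a compact $R$-range the upper bound is the routine finite-cover argument using the LDP upper bound and continuity of $R\mapsto\log R$. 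Finally, since $I$ is convex with minimum $0$ at $R^*$, one checks $R\mapsto\log R-2I(R/2)$ is increasing on $(0,2R^*)$ (its right-derivative there is at least $1/R>0$), so $\sup_{R>0}$ may be replaced by $\sup_{R\ge 2R^*}$; the same convexity/moment estimate shows this supremum is finite, so $\tau_d$ is a well-defined real number.

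Putting the pieces together, $\tfrac1n\log\EE^0_n[D_n]\to \rho+\tfrac12\log(2\pi e)+\sup_{R\ge 2R^*}\big(\log R-2I(R/2)\big)=\rho-\tau_d$, which is~(\ref{eq:degexp}); this limit is negative for $\rho<\tau_d$ (so $\EE^0_n[D_n]\to0$ exponentially) and positive for $\rho>\tau_d$ (so $\EE^0_n[D_n]\to\infty$ exponentially). I expect the main obstacle to be the upper bound in the Laplace asymptotics — precisely, showing that the polynomially-weighted tail of $S_n$ does not contribute, which is where condition~(\ref{cond1}) is both used and, in spirit, necessary. The reduction to $\EE[(X_0+X)^n]$ via Slivnyak and Campbell, and the convexity identity $I_2=2I(\cdot/2)$, are the two other ingredients, but both are comparatively routine.
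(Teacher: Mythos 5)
Your proposal is correct and follows essentially the same route as the paper: reduce $\EE^0_n[D_n]$ via Slivnyak (the paper uses the mass transport principle for the directed graph where you invoke Mecke/Campbell, but these are interchangeable here) to the single expectation $e^{n(\rho+\frac12\log(2\pi e))+o(n)}\,\EE[e^{n\log(\bar X_n+\bar X_n')}]$, then apply Varadhan's lemma using the LDP for the i.i.d.\ sum with rate function $2I(\cdot/2)$, with the moment condition for Varadhan checked from~(\ref{cond1}) via the same $2^{\gamma n}$ convexity bound. Your added remarks (contraction principle for $I_2=2I(\cdot/2)$, monotonicity of $R\mapsto\log R-2I(R/2)$ on $(0,2R^*)$) are correct and merely make explicit steps the paper leaves implicit.
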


It is sometimes more convenient to write the
 degree  threshold as
\begin{equation}			\label{Thresh.Boolean}
\tau_d = - \frac{1}{2} \log (2 \pi e) + \inf_{R \ge R^*} 
\left( 2 I(R) - \log (2 R) \right)~.
\end{equation}
Note that 
\begin{equation}
\tau_d \le - \frac{1}{2} \log (2 \pi e)   -\log (2 R^*)
\end{equation}
and that the R.H.S. of the last inequality is the
 degree  threshold in the case of deterministic radii \cite{PenroseCP}.


In the general case, the degree threshold can be described as
follows: let $\bar X_n'$ be an independent random variable
with the same law as $\bar X_n$. The volume of the
$n$-ball of random radius $(\bar X_n+\bar X_n') \sqrt{n}$
scales like $e^{nV + o(n)}$ as $n$ tends to infinity for some constant $V$; we have
$\tau_d=- V$ or equivalently the critical density
$e^{n\tau_d + o(n)}$ scales like the inverse of the volume of this $n$-ball.


\subsection{Percolation Threshold}


Consider the Palm version of the process in dimension $n$.
Consider the 
connected component of $\mathcal{C}_n$ that contains the origin, 
called the {\em cluster of the origin}, and denote the
set of points of the underlying Poisson process that
lie in this connected component by $K_n$.
The {\em percolation probability} in dimension $n$ is denoted by
\[
\theta_n := \PP^0_n( |K_n| = \infty)~,
\]
with $|A|$ the cardinality of set $A$.
This is one of the standard definitions for percolation probability 
in continuum percolation theory, see \cite[Section 1.4]{MeesterRoyBook}.

We are interested in the asymptotics of the percolation probability 
as $n \to \infty$. We claim that there is a number $\tau_p$, called the 
{\em percolation threshold}, such that for $\rho < \tau_p$ we have 
$\theta_n \to 0$ as $n \to \infty$, while for $\rho > \tau_p$ we have
$\liminf_n \theta_n > 0$. 


\begin{prop}
In the case of deterministic radii,
the percolation and the degree thresholds coincide,
i.e. $ \tau_p = \tau_d~$.
\end{prop}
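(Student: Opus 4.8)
The plan is to read the proposition off from the degree-threshold analysis of Theorem~\ref{thm:deg} together with the high-dimensional behaviour of the critical intensity for continuum percolation established in \cite{PenroseCP}. Two soft reductions come first. Since $\{|K_n|=\infty\}\subseteq\{D_n\ge 1\}$, Markov's inequality gives $\theta_n\le\PP^0_n(D_n\ge1)\le\EE^0_n[D_n]$ in every dimension, so by Theorem~\ref{thm:deg} we have $\theta_n\to0$ whenever $\rho<\tau_d$; hence $\tau_p\ge\tau_d$ (this is also the monotonicity of the thresholds already noted in the introduction). All the content is therefore the reverse inequality in the deterministic setting: $\liminf_n\theta_n>0$ for every $\rho>\tau_d$.

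The second reduction is to the Gilbert graph. In the deterministic setting every grain of $\mathcal{C}_n$ is a ball of radius $R^*_n\sqrt n$, so two grains meet exactly when their centres lie within distance $2R^*_n\sqrt n$; thus the clusters of $\mathcal{C}_n$ coincide with the clusters of the Gilbert graph $G_n$ on the Poisson points with connection radius $2R^*_n\sqrt n$, and $\theta_n$ is the probability that the origin lies in an infinite cluster of (the Palm version of) $G_n$. By Slivnyak's theorem $D_n$ is simply the number of $G_n$-neighbours of the origin, with mean $\EE^0_n[D_n]=e^{n\rho_n}\,\kappa_n\,(2R^*_n\sqrt n)^n$, where $\kappa_n$ is the volume of the unit ball of $\R^n$; this is exactly the ``expected number of neighbours per point'' parameter of $G_n$. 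By scale invariance of the Boolean model, percolation of $G_n$ depends only on this parameter, so the scaling of the radii by $\sqrt n$ is immaterial.

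Now I would invoke \cite{PenroseCP}: as the dimension grows, the critical value of the expected-number-of-neighbours parameter for percolation of the Gilbert graph tends to $1$, and on the supercritical side the cluster of a point is well approximated by a Galton--Watson process with that parameter as offspring mean, so the percolation probability tends to $1$ when the parameter tends to $\infty$. Combining with Theorem~\ref{thm:deg}: for $\rho>\tau_d$ one has $\EE^0_n[D_n]=e^{n(\rho-\tau_d)+o(n)}\to\infty$, so for all large $n$ the graph $G_n$ is (exponentially) supercritical, whence $\theta_n\to1$ and in particular $\liminf_n\theta_n>0$. With the first reduction this gives $\tau_p\le\tau_d$, so $\tau_p=\tau_d$; the fact that $R^*_n\to R^*\in(0,\infty)$ only enters through $o(n)$ corrections and does not affect the limiting exponents.

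The main obstacle is upgrading ``$\theta_n>0$ for each large $n$'' --- which is immediate from strict supercriticality and the definition of the critical intensity --- to ``$\liminf_n\theta_n>0$'', i.e. a lower bound on $\theta_n$ uniform in $n$. This is precisely where the mean-field character of high-dimensional continuum percolation is needed: in dimension $n$, two distinct grains both meeting a common grain have centres at typical mutual distance of order $\sqrt2\cdot 2R^*_n\sqrt n$, well beyond the connection radius, so re-intersections encountered while exploring the cluster of the origin are exponentially negligible and the branching-process lower bound on $\theta_n$ is asymptotically tight. Since the offspring mean $\EE^0_n[D_n]$ diverges, the corresponding survival probability --- hence $\theta_n$ --- tends to $1$. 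Making this comparison precise is the substance of \cite{PenroseCP}, which is why I would quote it rather than redo it.
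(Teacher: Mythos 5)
Your proof is correct and takes essentially the same route as the paper's: the inequality $\tau_p \ge \tau_d$ follows from the vanishing of $\EE^0_n[D_n]$ for $\rho < \tau_d$, and the inequality $\tau_p \le \tau_d$ (in the deterministic case) rests on Penrose's high-dimensional continuum-percolation theorem \cite{PenroseCP}, combined with a monotonicity comparison to a fixed supercritical expected-degree parameter $y>1$. Your stronger observation that $\theta_n \to 1$ above $\tau_d$ (versus the paper's $\liminf_n \theta_n > 0$) follows from the same comparison by letting $y \to \infty$, and your remark about needing a uniform-in-$n$ lower bound on $\theta_n$ correctly identifies why Penrose's branching-process result, rather than mere strict supercriticality in each fixed dimension, is essential.
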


To see that 
$\tau_p \ge \tau_d$, note that if $\rho < \tau_d$ then
$\EE^0_n[D_n] \to 0$ as $n \to \infty$ from Theorem \ref{thm:deg}.
It follows that $\PP^0_n(D_n = 0) \to 1$ as $n \to \infty$. Hence 
$\PP^0_n( |K_n| = 1) \to 1$ as $n \to \infty$,
from which it follows that 
$\theta_n \to 0$ as $n \to \infty$. This means $\rho < \tau_p$.
This argument actually works in the general case,
i.e. it does not require the 
assumption of deterministic radii.

To see that $\tau_p \le \tau_d$ in the case of deterministic radii,
we need to prove that if $\rho > \tau_d$
then $\liminf_n \theta_n > 0$. To this end, let us recall the main result of
\cite{PenroseCP}. In our notation, in \cite{PenroseCP} Penrose considers the 
sequence of Poisson Boolean models with deterministic radii 
$R^*_n = R^*$ for each $n \ge 1$, and
with normalized logarithmic intensities $\rho^y_n$ defined via 
\[
e^{n \rho^y_n} \frac{(\pi n)^{\frac{n}{2}}}{\Gamma(\frac{n}{2} + 1)}
(2 R^*)^n = y~,~~\mbox{ for all $n \ge 1$}~,
\]
where $y > 0$ is a fixed real number. 
Let $\theta^y_n$ denote the percolation probability in dimension $n$ with these
choices. The main result \cite[Theorem 1]{PenroseCP}
is that $\lim_{n \to \infty} \theta^y_n$ exists and 
equals the survival probability of a branching process with offspring distribution 
Poisson with mean $y$, and started with a single individual. In particular, this 
means that if $y > 1$, then $\liminf_{n \to \infty} \theta^y_n > 0$.

In our scenario with deterministic radii, the  degree  threshold 
(see eqn.  (\ref{Thresh.Boolean})) is 
\[
\tau_d = - \frac{1}{2} \log (2 \pi e) - \log (2 R^*)~,~~\mbox{ (deterministic radii)}~.
\]
It suffices to observe that if $\rho > \tau_d$, then
\[
\lim_{n \to \infty} e^{n \rho_n} \frac{(\pi n)^{\frac{n}{2}}}{\Gamma(\frac{n}{2} + 1)}
(2 R^*_n)^n = \infty~.
\]
That $\liminf_{n \to \infty} \theta_n > 0$ then follows from the 
result of \cite{PenroseCP} cited above. 

The main result on the case with random radii is: 

\begin{thm}
\label{thm:perc}
The percolation threshold is given by the formula
\begin{equation}		\label{Thresh.Perc}
\tau_p = - \frac{1}{2} \log (2 \pi e) + 
\inf_{R \ge R^*} \left( I(R) - \log(R + R^*) \right)~.
\end{equation}
That is, for $\rho<\tau_p$, $\theta_n\to 0$ when $n$ tends
to infinity, whereas for $\rho>\tau_p$ we have
$\liminf_n \theta_n >0$.
\end{thm}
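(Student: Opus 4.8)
The plan is to prove the two halves separately. For the lower bound on $\tau_p$ (i.e.\ $\rho < \tau_p \Rightarrow \theta_n \to 0$), the natural tool is a branching-process domination combined with a first-moment estimate, refining the crude argument already given for $\tau_p \ge \tau_d$. Exploring the cluster of the origin via a breadth-first search, the number of offspring of a grain (a grain newly discovered, of radius $s\sqrt n$) that are themselves newly discovered is stochastically dominated by the total number of grains meeting a grain of radius $s\sqrt n$ in a fresh (reduced Palm) copy of the process; by Slivnyak this is a Poisson-type count whose mean we already analyzed in the degree-threshold computation, but now we must keep track of the \emph{size} of the offspring grains, since a grain of radius $R\sqrt n$ has more children than one of radius $R^*\sqrt n$. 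So I would set up a multi-type branching process with type space $\R^+$ (the radius), whose mean offspring kernel $m_n(s, dR)$ has exponential growth rate, computed exactly as in \eqref{eq:meanasy}, equal to $\rho + \frac12\log(2\pi e) + \log R - I(R-s)$ when a parent of radius $s$ produces children of radius near $R$ (valid for $R \ge s + R^*$; for $s < R < s+R^*$ the rate is $\rho + \frac12\log(2\pi e) + \log R$). The survival probability of such a branching process is controlled by the spectral radius of the mean kernel; I expect that $\rho < \tau_p$ forces this spectral radius below $1$ asymptotically, after the right logarithmic normalization, because the relevant Perron eigenfunction concentrates (on the exponential scale) on the radius value $R$ achieving the infimum in \eqref{Thresh.Perc} with the split $s \approx R^*$, $R-s \approx R - R^*$, and plugging $s=R^*$ into the parent-size slot is consistent because a typical parent already has radius $\approx R^*$. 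Making this precise — identifying $\tau_p$ with the threshold where the normalized spectral radius crosses $1$ — is where the analytic work lies; a clean way is to bound the expected total progeny, $\sum_{k\ge 0}\|m_n^{*k}\|$, and show it is finite (indeed $\to 0$ in the appropriate sense) when $\rho < \tau_p$, which gives $\PP^0_n(|K_n| = \infty) \le \PP^0_n(|K_n| \ge N) \to 0$.

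For the upper bound ($\rho > \tau_p \Rightarrow \liminf_n \theta_n > 0$), I would construct a supercritical branching process that is \emph{dominated by} the cluster exploration, i.e.\ a lower bound on the cluster. The idea is to restrict attention to grains of a favorable radius: fix $R$ near the minimizer in \eqref{Thresh.Perc}, and in the exploration only follow children whose radius lies in a small window around $R$ and which are located in a suitable annular shell relative to the parent (at distance $\approx R + R^*$ from a parent of radius $\approx R^*$ — wait, the relevant geometry is that a parent grain of radius $\approx R\sqrt n$ and a child of radius $\approx R^*\sqrt n$... no: the symmetric choice is to alternate, but since we only need \emph{a} supercritical sub-process it is cleaner to insist all followed grains have radius in $[R-\epsilon, R+\epsilon]$ and count, for such a parent, the children of the same favorable radius, whose mean number grows like $e^{n(\rho + \frac12\log(2\pi e) + \log(2R) - 2I(R)) + o(n)}$ — but we should instead optimize over the parent/child radius pair, which is exactly the optimization defining $\tau_p$ when one radius is pinned to $R^*$). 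The key point is that, because the rate function has $I(R^*)=0$, producing \emph{some} child of radius $\approx R^*$ from a parent of any radius $\le$ (that radius$)+R^*$ costs nothing on the exponential scale, so the natural supercritical sub-process alternates: a grain of radius $\approx R^*$ (cost $0$ to exist as a child) begets, at distance $\approx R + R^*$, grains of radius $\approx R\sqrt n$, each of which in turn begets (at distance $\approx R + R^*$, now with the roles of the two radii in the intersection condition being $R$ for the parent and $R^*$ for the grandchild) a grain of radius $\approx R^*$; the mean number of grandchildren of radius $\approx R^*$ per grain of radius $\approx R^*$ then grows like $e^{2n(\rho + \frac12\log(2\pi e) - I(R) + \log(R+R^*))/?}$ — this needs care, but the upshot is a mean that $\to\infty$ exactly when $\rho > \tau_p$. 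One must check the sub-process is genuinely a branching process: distinct explored grains use disjoint fresh regions of space (standard Poisson independence, as in \cite{PenroseCP}), the offspring counts are asymptotically Poisson (law of rare events, since each candidate grain is present with tiny probability but there are exponentially many candidate locations), and the offspring mean tends to infinity, so by standard branching-process theory the survival probability is bounded below by a positive constant for all large $n$. Since survival of the sub-process implies $|K_n| = \infty$, we get $\liminf_n \theta_n > 0$.

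The main obstacle, I expect, is the upper bound: the lower bound on $\tau_p$ follows fairly mechanically from a first-moment / expected-total-progeny bound that reuses the degree-threshold computation almost verbatim, but the lower bound on $\theta_n$ requires genuinely constructing an embedded supercritical multi-type branching process and controlling it uniformly in $n$. Two technical points will need attention: (1) the spatial independence needed to get an honest branching process — one must carve out, for each explored grain, a ball of radius $\Theta(\sqrt n)$ around its candidate children locations that has not been examined before, and verify that the geometry (grains at mutual distance $\approx (R+R^*)\sqrt n$, radii $\Theta(\sqrt n)$) admits exponentially many such mutually-non-interfering candidate locations on the relevant sphere — this is the same "most of the mass of a sphere is near its equator / concentration of measure on the sphere" phenomenon underlying \eqref{eq:meanasy}, and should be quantifiable via the volume-of-spherical-cap asymptotics; and (2) justifying that the offspring distribution, suitably truncated, converges to Poisson with the claimed (divergent) mean, so that the survival probability is bounded away from $0$ — here one can use a second-moment bound on the offspring count to rule out clumping, or simply work with a thinned version where each candidate location is retained independently with the appropriate small probability. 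A final bookkeeping check: one should confirm that $\tau_p$ as defined in \eqref{Thresh.Perc} indeed lies between $\tau_d$ and $\tau_v$, which also serves as a sanity check that the optimization in \eqref{Thresh.Perc} (with the asymmetric term $\log(R+R^*)$, reflecting that one of the two touching grains can be pinned costlessly to the mean radius $R^*$) is the right object — and note it specializes correctly: for deterministic radii $I$ is $+\infty$ off $R^*$, the infimum is at $R = R^*$, and $\tau_p = -\frac12\log(2\pi e) - \log(2R^*) = \tau_d$, consistent with the Proposition.
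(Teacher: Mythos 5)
Your plan for $\rho < \tau_p \Rightarrow \theta_n \to 0$ contains a genuine gap: the spectral radius of the mean offspring kernel $m_n(s,dR)\approx e^{n(\rho + \frac{1}{2}\log(2\pi e) + \log(s+R) - I(R))}\,dR$ does \emph{not} drop below $1$ at $\tau_p$. Running the Laplace computation with all intermediate radii set equal to a common $R$, the per-generation growth rate of $\|m_n^{*k}\|$ is $\sup_R\big(\rho + \tfrac{1}{2}\log(2\pi e) + \log(2R) - I(R)\big) = \rho + \log 2 - \tau_v$, so the spectral radius is $e^{n(\rho - \tau_v + \log 2) + o(n)}$, which already exceeds $1$ for all $\rho > \tau_v - \log 2$; and $\tau_v - \log 2 \le \tau_d \le \tau_p$. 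In particular, throughout $\tau_d < \rho < \tau_p$ the expected total progeny $\sum_k\|m_n^{*k}\|$ diverges (later generations are dominated by the exponentially rare escalation of radii toward $R_v$, and this eventually overwhelms the initial smallness at $R^*$), so the bound you propose cannot close. The correct and much simpler argument, which is what the paper does, works only with the first generation: condition on the radius $S$ of the grain at the origin, use $\PP(S > R^*+\epsilon') \to 0$, and observe that uniformly on $\{S\le R^*+\epsilon'\}$ one has $\EE_n^0[D_n\mid S] \approx e^{n(\rho + \frac{1}{2}\log(2\pi e) + \sup_{\tilde R \ge R^*}(\log(\tilde R + S) - I(\tilde R)))} \to 0$, because for $\epsilon' < \epsilon$ small enough $\rho < -\frac{1}{2}\log(2\pi e) + \inf_{R\ge R^*}(I(R) - \log(R+R^*+\epsilon))$. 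Dominated convergence then gives $\PP_n^0(D_n > 0) \to 0$: with high probability the grain at the origin meets \emph{no} other grain, so $\theta_n \to 0$. Your aside that ``plugging $s=R^*$ into the parent-size slot is consistent'' points at the right conditioning, but the spectral / total-progeny framing cannot deliver it, since the mean object grows while the survival probability still vanishes because the first generation is typically empty.

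For $\rho > \tau_p \Rightarrow \liminf_n\theta_n > 0$, your embedded multi-type branching construction is a plausible route but would require re-deriving a version of Penrose's high-dimensional percolation theorem, with all the spatial-independence and Poisson-convergence overhead you flag yourself. The paper sidesteps this by reducing to Penrose's deterministic theorem as a black box: thin the reduced process to keep only points whose grain radius exceeds $(R_p-\gamma)\sqrt{n}$, then further shrink every retained grain to exactly the deterministic radius $(R_p-\gamma)\sqrt{n}$. The resulting deterministic Boolean model has normalized log-intensity $\rho - I(R_p-\gamma)$, and since $R_p > R^*$ forces $R_p + R^* - 2\gamma < 2(R_p-\gamma)$, the choice of $\gamma$ guarantees $\rho - I(R_p-\gamma) > -\frac{1}{2}\log(2\pi e) - \log(2(R_p-\gamma))$, i.e.\ it is above Penrose's deterministic percolation threshold. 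Combined with (a) the origin's grain having radius $\ge(R^*-\gamma)\sqrt n$ w.h.p.\ and (b) a Poisson count of thinned grains touching the origin's grain whose mean tends to infinity, this gives $\liminf_n\theta_n>0$ (with the degenerate case $R_p = R^*$ handled by thinning at $(R^*-\gamma)\sqrt n$ instead). The reduction to deterministic radii is the step your proposal misses, and it is what makes the supercritical side tractable without re-proving Penrose.
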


Note that 
\[
\tau_p \le - \frac{1}{2} \log (2 \pi e) - \log (2 R^*)~.
\]
In the case of deterministic radii the 
minimum in the expression for the percolation 
threshold in eqn. (\ref{Thresh.Perc}) is achieved at $R = R^*$ and so we have
\[
\tau_p = - \frac{1}{2} \log (2 \pi e) - \log (2 R^*)~,~~\mbox{ (deterministic radii)}~.
\]
This also equals the value of the  degree  threshold in the case of deterministic 
radii.

The volume of the
$n$-ball of random radius $(\bar X_n+R^*) \sqrt{n}$
scales like $e^{nV + o(n)}$ as $n$ tends to infinity, for some constant $V$; we have
$\tau_p=- V$ or equivalently the critical density
$e^{n\tau_p + o(n)}$ scales like the inverse of the volume of this $n$-ball.
The intuition for this result is that what matters for percolation
is the mean number of balls that intersect a ball with typical
radius (namely roughly $R^*\sqrt{n}$):
if $\rho<\tau_p$, then on an event whose probability
tends to 1 as $n$ tends to infinity, namely the event that the ball of the point at $\uzero$ has
a radius in the interval $(R^*\sqrt{n}-\alpha_n, R^*\sqrt{n}+\alpha_n)$
for appropriate $0 < \alpha_n = O(\sqrt{n})$, no other ball intersects the latter ball asymptotically (because
$\rho<\tau_p$) and hence there is no percolation.
Conversely, for $\rho>\tau_p$, when the ball of $\uzero$ is at typicality, 
i.e. its radius lies in an interval of the kind defined above, 
we can consider a thinned version of the 
reduced process where we only retain points whose balls have radii that are 
at least above a threshold slightly less than the value of $R$ achieving the 
infimum in the definition of $\tau_p$ (assume for the moment that this 
infimum is achieved),
and we will still have that 
the 
mean number of balls intersecting the ball of the origin
tends to infinity like $e^{n\delta}$ with some
$\delta >0$. 
Since these balls themselves have radius at least as big as the typical 
ball of the unconditional distribution, this scenario propagates via a supercritical 
branching process, implying asymptotic percolation.
\footnote{For technical reasons, the formal proof looks slightly different from this
sketch, but this is the basic intuition.}

\subsection{Ordering of the Thresholds}		\label{ss.ordering}

\begin{thm}
\label{thm:order}
Under the foregoing assumptions,
\begin{equation}		\label{Relations.Betwn.Threshs}
\tau_d \le \tau_p \le \tau_v~.
\end{equation}
\end{thm}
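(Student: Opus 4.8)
The plan is to establish the two inequalities $\tau_d \le \tau_p$ and $\tau_p \le \tau_v$ separately, in each case by comparing the closed-form expressions for the thresholds given in Theorems~\ref{thm:vf}, \ref{thm:deg}, and \ref{thm:perc}. Since all three thresholds share the additive constant $-\frac12\log(2\pi e)$, it suffices to compare the three infima
\[
\inf_{R \ge 2R^*}\bigl(2I(R/2) - \log R\bigr),\qquad
\inf_{R \ge R^*}\bigl(I(R) - \log(R+R^*)\bigr),\qquad
\inf_{R \ge R^*}\bigl(I(R) - \log R\bigr),
\]
and show that the first is $\le$ the second is $\le$ the third. The inequality $\tau_p \le \tau_v$ should be the easy one: for every $R \ge R^*$ we have $\log(R+R^*) \ge \log R$, hence $I(R) - \log(R+R^*) \le I(R) - \log R$ pointwise on the common domain $[R^*,\infty)$, and taking infima preserves the inequality. (One should also note $R + R^* > 0$ so the logarithm is well-defined; this is immediate since $R \ge R^* > 0$.)

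For $\tau_d \le \tau_p$ I would use the substitution that rewrites the degree threshold as in eqn.~(\ref{Thresh.Boolean}): $\inf_{R \ge 2R^*}(2I(R/2) - \log R) = \inf_{S \ge R^*}(2I(S) - \log(2S))$ via $S = R/2$. Now compare, for each fixed $S \ge R^*$, the degree integrand $2I(S) - \log(2S)$ against the percolation integrand at the same argument, $I(S) - \log(S + R^*)$. Their difference is
\[
\bigl(2I(S) - \log(2S)\bigr) - \bigl(I(S) - \log(S+R^*)\bigr) = I(S) + \log\frac{S+R^*}{2S} = I(S) - \log\frac{2S}{S+R^*}.
\]
Since $S \ge R^*$ we have $2S \ge S + R^*$, so $\log\frac{2S}{S+R^*} \ge 0$; this term is nonnegative and bounded, but it need not be dominated by $I(S)$ for all $S$. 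So a pointwise comparison at equal arguments does \emph{not} work, and this is the main obstacle. The right move is instead to exploit convexity of $I$ together with $I(R^*) = 0$: for $S \ge R^*$, the value $I(S) = I(S) + I(R^*)$ is, by convexity, at least... no — rather, I would argue in the other direction. Given any $R \ge R^*$ feasible for the percolation infimum, I want to produce a feasible point for the degree infimum with a value that is no larger. Take the pair $(s, S) = (R^*, R)$ in the \emph{two-variable} form of the degree rate appearing in the derivation of eqn.~(\ref{Boolean}), namely $-I(s) - I(S-s) + \log S$ evaluated at grain radii $s$ and $S - s$...

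Here is the cleanest route. Recall from the chain of equalities leading to eqn.~(\ref{Boolean}) that
\[
-\tau_d - \tfrac12\log(2\pi e) \;=\; \sup_{R > R^*}\Bigl(\log R + \sup_{0 < s \le R - R^*}\bigl(-I(s) - I(R-s)\bigr)\Bigr).
\]
Fix any $R' \ge R^*$ feasible for the percolation supremum $\sup_{R \ge R^*}(\log(R+R^*) - I(R))$, and set $R = R' + R^* > R^*$, $s = R^*$ in the degree expression. Then $0 < s = R^* \le R - R^* = R'$ is admissible, $I(s) = I(R^*) = 0$, and the contribution is $\log(R' + R^*) - I(R') = \log(R+R^*)|_{\text{shifted}} - I(R')$, which is exactly the percolation integrand at $R'$. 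Hence the degree supremum is $\ge$ the percolation supremum, i.e. $-\tau_d \ge -\tau_p$, i.e. $\tau_d \le \tau_p$. The only subtlety to check is the boundary case: if the percolation supremum is attained (or approached) at $R' = R^*$, then $R = 2R^* $, $s = R^*$ is still admissible since $0 < R^* \le R - R^* = R^*$, so nothing breaks. Thus the two-variable (pre-simplification) form of the degree threshold is the key tool, and once one works with that form rather than with eqn.~(\ref{Thresh.Boolean}) the argument is a one-line substitution; the main pitfall is trying to force a pointwise comparison between the already-simplified single-variable expressions, which does not go through directly.
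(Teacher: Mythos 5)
Your proof is correct, and for $\tau_p \le \tau_v$ it is identical to the paper's (pointwise domination $\log(R+R^*)\ge\log R$ over the common domain $[R^*,\infty)$). For $\tau_d \le \tau_p$ you reach the same conclusion as the paper but by a different packaging. The paper argues directly on the single-variable formulas: for any $R\ge R^*$, put $S=\frac{R+R^*}{2}\ge R^*$, so that $2S=R+R^*$ and, by convexity together with $I(R^*)=0$, $2I(S)\le I(R)+I(R^*)=I(R)$; hence $2I(S)-\log(2S)\le I(R)-\log(R+R^*)$, and taking infima gives $\tau_d\le\tau_p$. You instead route through the two-variable expression from the derivation of eqn.~(\ref{Boolean}) and choose the asymmetric split $(s,R-s)=(R^*,R')$ at $R=R'+R^*$. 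Unpacked, this is the same inequality: the step in (\ref{Boolean}) that collapses the two-variable sup to $\sup_{R\ge 2R^*}(\log R-2I(R/2))$ is exactly where convexity forces the optimal split to be $s=R/2$, and lower-bounding that optimum by the value at $s=R^*$ is your version of $2I(\frac{R'+R^*}{2})\le I(R')$. So the key idea is shared; the paper's version is slightly cleaner because it works only with the threshold formulas stated in the theorems and invokes convexity explicitly, whereas yours leans on the chain of algebraic identities in the heuristic derivation of (\ref{Boolean}). Your observation that the naive comparison of $2I(S)-\log(2S)$ against $I(S)-\log(S+R^*)$ at equal $S$ fails is also correct, and it is precisely the reason a change of variable (or a non-symmetric split) is needed.
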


\paragraph{Remark}
The ordering relation of the last theorem is not limited to the Poisson case.
The family of Boolean models considered here can naturally be extended to a
family of particle processes \cite{SW}, where the $n$-th particle process
features a stationary and ergodic point process $\mu_n$ in $\R^n$ with
normalized logarithmic intensity $\rho_n$ such that $\rho_n \to \rho$ as $n \to \infty$,
and i.i.d. marks satisfying the same independence and LDP assumptions as above.
This family of particle processes will be said to admit 
a volume fraction threshold $\tau_v$ if the associated ${\cal C}_n$,
still defined by eqn. (\ref{eq:defbool}), is such that
$\PP( \uzero \in \mathcal{C}_n)$
asymptotically approaches $0$ as $n$ tends to infinity
for $\rho < \tau_v$, while for $\rho > \tau_v$ it
asymptotically approaches $1$. 
Similarly, it will be said to admit a degree threshold $\tau_d$ if
the Palm expectation of $D_n$ tends to 0 as $n$ tends to infinity for $\rho < \tau_d$,
while for $\rho > \tau_d$ it
tends to $\infty$. 
The definition of the percolation threshold can also be extended verbatim.
Assuming that these three thresholds exist, then they must satisfy eqn. (\ref{Relations.Betwn.Threshs}).
This follows from first principles. If the volume fraction asymptotically
tends to 1, then percolation must hold asymptotically;
hence $\tau_p\le \tau_v$. If the mean number of balls that
intersect the ball of the origin tends to 0, then percolation
cannot hold asymptotically; hence $\tau_d\le \tau_p$.\\

Returning to the Poisson case, to better understand the thresholds, 
we first need to recall some facts from basic convex analysis
\cite{Rockafellar}.
Since it is a good convex rate function, $I(\cdot)$ is {\em proper}, as defined
in
\cite[pg. 24]{Rockafellar}.
Further, since it is lower semicontinuous, its epigraph is closed 
\cite[Thm. 7.1]{Rockafellar}, so $I(\cdot)$ is {\em closed} in the sense of
\cite[pg. 52]{Rockafellar}. Recall that the domain of $I(\cdot)$, defined as
the set of $R$ for which $I(R)$ is finite, is an interval, which is nonempty 
because $I(R^*) = 0$.
Since $I(\cdot)$ is closed and proper,
the right and left derivatives, $I^\prime_+(\cdot)$ and
$I^\prime_-(\cdot)$ respectively, are well-defined as functions on $\mathbb{R}$
(both defined to be $+\infty$ to the right of the domain of $I(\cdot)$ and 
to be $-\infty$ to the left of the domain of $I(\cdot)$). These are nondecreasing 
functions, each of which is finite on the interior of the domain of $I(\cdot)$, and
satisfy (\cite[Thm. 24.1]{Rockafellar}):
\[
I^\prime_+(z_1) \le I^\prime_-(x) \le I^\prime_+(x) \le I^\prime_-(z_2)~,~~
\mbox{ if $z_1 < x < z_2$}~,
\]
and, for all $x \in \mathbb{R}$,
\[
\lim_{z \uparrow x} I^\prime_-(z) = \lim_{z \uparrow x} I^\prime_+(z)
= I^\prime_-(x) \mbox{ and } 
\lim_{z \downarrow x} I^\prime_-(z) = \lim_{z \downarrow x} I^\prime_+(z)
= I^\prime_+(x)~.
\]
We further note that $0 \in [I^\prime_-(R^*), I^\prime_+(R^*)]$, since
$I(\cdot)$ is a nonnegative function with $I(R^*) = 0$.

This means we can define the following radii:
\begin{itemize}
\item $R_v \ge R^*$ as a value of $R$ satisfying 
\[
\frac{1}{R_v} \in [I^\prime_-(R_v), I^\prime_+(R_v)]~.
\]
Such $R_v$ achieves the infimum in 
eqn. (\ref{Thresh.Vol}) for the volume fraction threshold. 
Further, since $R \mapsto \frac{1}{R}$ is strictly 
decreasing and decreases to $0$ as $R \to \infty$, it follows that $R_v$ is
uniquely defined and finite.
\item $R_d \ge R^*$, as a value of $R$ satisfying 
\[
\frac{1}{2 R_d} \in [I^\prime_-(R_d), I^\prime_+(R_d)]~.
\]
Such $R_d$
achieves the minimum
in eqn.  (\ref{Thresh.Boolean}) for the 
degree  threshold. 
Further, since $R \mapsto \frac{1}{2 R}$ is strictly 
decreasing and decreases to $0$ as $R \to \infty$, it follows that $R_d$ is
uniquely defined and finite.
\item $R_p \ge R^*$, as a value of $R$ satisfying 
\[
\frac{1}{R_p + R^*} \in [I^\prime_-(R_p), I^\prime_+(R_p)]~.
\]
Such $R_p$
achieves the minimum in eqn.
(\ref{Thresh.Perc}) for the percolation threshold.
Further, since $R \mapsto \frac{1}{R + R^*}$ is strictly 
decreasing and decreases to $0$ as $R \to \infty$, it follows that $R_p$ is
uniquely defined and finite.
\end{itemize}
\begin{thm}
\label{thm:order2}
With the foregoing definitions, we have
\begin{eqnarray}			
\label{eq:total-order} R^*\le R_d \le R_p \le R_v \le R_p + R^* \le 2 R_d~.
\end{eqnarray}
\end{thm}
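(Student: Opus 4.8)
The plan is to exploit the characterizations of $R_v$, $R_d$, $R_p$ as the points where the decreasing functions $R \mapsto 1/R$, $R \mapsto 1/(2R)$, $R \mapsto 1/(R+R^*)$ respectively "cross" the (multivalued, nondecreasing) subdifferential $[I'_-(R), I'_+(R)]$, together with the fact that on the domain of $I$ the subdifferential is nondecreasing and equals $0$ at $R^*$. I would first establish a general comparison lemma: if $\phi$ and $\psi$ are strictly decreasing with $\phi \ge \psi$ pointwise, and $R_\phi$, $R_\psi$ are the (unique) points with $\phi(R_\phi) \in \partial I(R_\phi)$ and $\psi(R_\psi) \in \partial I(R_\psi)$, then $R_\phi \le R_\psi$. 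The argument is monotonicity: at $R_\phi$ we have $\psi(R_\phi) \le \phi(R_\phi) \le I'_+(R_\phi)$, and since $\psi$ is strictly decreasing while $I'_+$ is nondecreasing, $\psi(R) > I'_+(R) \ge I'_-(R)$ is impossible for $R > R_\phi$ only after the crossing; more carefully, for $R < R_\phi$ one checks $\psi(R) \ge \psi(R_\phi)$... I would phrase it as: the set $\{R : I'_-(R) \le \psi(R)\}$ is a down-set relative to... actually the cleanest route is that $R_\psi = \sup\{R \ge R^* : I'_-(R) \le \psi(R)\}$ and $R_\phi = \sup\{R \ge R^* : I'_-(R) \le \phi(R)\}$, and since $\phi \ge \psi$ the first set contains the second, giving $R_\phi \le R_\psi$. (One must check this sup-characterization is consistent with the bracket condition, using the left/right-limit relations for $I'_\pm$ quoted in the excerpt; this is where a little care is needed.)

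Granting the lemma, the three middle inequalities $R_d \le R_p \le R_v$ follow immediately from the pointwise ordering $\frac{1}{2R} \le \frac{1}{R+R^*} \le \frac{1}{R}$, valid for all $R \ge R^*$ (the first because $R \ge R^*$, the second likewise). The leftmost inequality $R^* \le R_d$ is already built into the definition of $R_d$. For $R^* \le R_p$ and $R^* \le R_v$ one can either note they are part of the definitions, or rederive them: at $R = R^*$ we have $0 \in \partial I(R^*)$ while $\frac{1}{R^*} > 0$ and $\frac{1}{2R^*} > 0$, so the crossing with each strictly positive decreasing function occurs at some $R \ge R^*$.

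The two outer-right inequalities $R_v \le R_p + R^*$ and $R_p + R^* \le 2R_d$ are the substantive part and, I expect, the main obstacle, since they compare a subdifferential condition at one point with one at a shifted point. For $R_v \le R_p + R^*$: by definition $\frac{1}{R_v} \in \partial I(R_v)$; I want to show $R_v \le R_p + R^*$, equivalently (by monotonicity of $\partial I$ and strict monotonicity of $R \mapsto 1/R$) that $\frac{1}{R_p + R^*} \le I'_+(R_p + R^*)$. Now $\frac{1}{R_p+R^*} \le I'_+(R_p)$ by the definition of $R_p$, and $I'_+(R_p) \le I'_-(R_p + R^*) \le I'_+(R_p+R^*)$ since $R^* > 0$ and $I'_\pm$ are nondecreasing. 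This closes it. For $R_p + R^* \le 2R_d$: I must show $\frac{1}{2R_d} \ge$ something forcing $R_p \le 2R_d - R^*$, i.e. (again by monotonicity) that $\frac{1}{(2R_d - R^*) + R^*} = \frac{1}{2R_d} \le I'_+(2R_d - R^*)$. But $\frac{1}{2R_d} \le I'_+(R_d)$ by definition of $R_d$, and $2R_d - R^* \ge R_d$ precisely because $R_d \ge R^*$, so $I'_+(R_d) \le I'_+(2R_d - R^*)$ by monotonicity. Hence $\frac{1}{2R_d} \le I'_+(2R_d - R^*)$, which via the sup-characterization gives $R_p \le 2R_d - R^*$, i.e. $R_p + R^* \le 2R_d$. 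The only delicate point throughout is handling the endpoints of the domain of $I$ and the jumps of $I'_\pm$ rigorously — I would dispatch that once and for all by stating the sup-characterization of each $R_\bullet$ as a preliminary remark, justified by the limit relations $\lim_{z\uparrow x} I'_+(z) = I'_-(x)$ and $\lim_{z\downarrow x} I'_-(z) = I'_+(x)$ recalled in the excerpt, and then all six inequalities reduce to the monotonicity manipulations above.
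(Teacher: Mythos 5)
Your argument is correct and runs on the same engine as the paper's proof: the monotonicity of the subdifferential $[I'_-(\cdot),I'_+(\cdot)]$ of the convex rate function, crossed with the three strictly decreasing comparison curves $R \mapsto 1/(2R)$, $R \mapsto 1/(R+R^*)$, $R \mapsto 1/R$. You package this as a sup-characterization $R_\bullet = \sup\{R \ge R^* : I'_-(R) \le \phi_\bullet(R)\}$ plus a comparison lemma, whereas the paper dispatches each of the six inequalities with the same one-line contradiction template ``$R_a > R_b \Rightarrow \phi_a(R_a) < \phi_b(R_b) \Rightarrow R_a < R_b$'' and, perhaps surprisingly, applies it verbatim to the two shifted inequalities $R_v \le R_p + R^*$ and $R_p + R^* \le 2 R_d$ as well (using $R^* \le R_p, R_d$ to reduce them to the same comparison). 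You anticipated those two to be the main obstacle and treated them separately; your separate treatment is correct and is essentially the paper's contradiction argument unpacked into monotonicity steps, so the two routes are the same in substance, just phrased differently.

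One small but real slip: as stated, your comparison lemma has the inequality reversed. If $\phi \ge \psi$ pointwise with both strictly decreasing, then $\{R \ge R^* : I'_-(R) \le \psi(R)\} \subseteq \{R \ge R^* : I'_-(R) \le \phi(R)\}$ (the second set contains the first, not the other way around), so the conclusion should be $R_\psi \le R_\phi$, not $R_\phi \le R_\psi$. When you then derive $R_d \le R_p \le R_v$ from $1/(2R) \le 1/(R+R^*) \le 1/R$, you are in fact applying the corrected lemma, so the conclusions you draw are right; this is a transcription error, not a gap. Also, where you say that $R_v \le R_p + R^*$ is ``equivalently'' the claim $\frac{1}{R_p+R^*} \le I'_+(R_p+R^*)$, what your argument actually uses (and correctly establishes) is only that the latter suffices for the former.
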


\section{Proofs}
\label{sec:proofs}
\subsection{Proof of Theorem \ref{thm:vf}}

Below we will use the {\em directed random geometric graph}
built as follows: its vertices are the nodes of the point process and
there is an edge from $T_n^{(k)}$ to $T_n^{(l)}$, $l\ne k$ if
$T_n^{(l)}\in B(T_n^{(k)}, X_n^{(k)} \sqrt{n})$.

Let $d^-_n$ denote the in-degree
of the node at the origin in this random directed graph under $\PP^0$,
namely the number of points whose ball 
contains the origin.
Let $d_n^+$ denote the out-degree
of the origin, namely the number of points which fall in the
ball of the point at the origin.
From the mass transport principle \cite{Last}, or by straightforward
elementary arguments based on an ergodic theorem for spatial averages, we have
$$\EE_n^0 [d^+_n] = \EE_n^0 [d^-_n].$$

Now, we have 
\begin{eqnarray*}
\EE_n^0[d_n^+] &\stackrel{(a)}{=}& 
e^{n \rho_n} \EE[ \frac{(\pi n)^{\frac{n}{2}}}{\Gamma(\frac{n}{2} + 1)}
\bar{X}_n^n]\\
&=& e^{n \rho_n} \EE[e^{\frac{n}{2} \log ( 2 \pi e) + o(n)} e^{n \log \bar{X}_n}]\\
&=& e^{n (\rho + \frac{1}{2} \log (2 \pi e)) + o(n)} \EE[ e^{n \log \bar{X}_n}]~.
\end{eqnarray*}

Here step (a) follows from
Slivnyak's theorem.
%
%
Then we have the following result.
\begin{lem}
\begin{equation}		\label{Asymp.Count}
\frac{1}{n} \log \EE_n^0[ d_n^-] \to 
\rho + \frac{1}{2} \log ( 2 \pi e) + \sup_{R \ge R^*} (\log R - I(R))~,~~\mbox{ as $n \to \infty$}~.
\end{equation}
\end{lem}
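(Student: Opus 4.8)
The plan is to establish \eqref{Asymp.Count} by evaluating the Palm expectation $\EE_n^0[d_n^-]$ in closed form and then applying Varadhan's lemma to the resulting expression. Since $\EE_n^0[d_n^-] = \EE_n^0[d_n^+]$ by the mass transport identity already recorded, and since the displayed computation above gives
\[
\EE_n^0[d_n^+] = e^{n(\rho + \frac{1}{2}\log(2\pi e)) + o(n)}\, \EE[e^{n \log \bar X_n}]~,
\]
it suffices to show that
\[
\frac{1}{n} \log \EE[e^{n \log \bar X_n}] \to \sup_{R \ge R^*}(\log R - I(R)) \quad \text{as } n \to \infty~.
\]
The right-hand side is exactly $\sup_{R > 0}(\log R - I(R))$, because $I(R^*)=0$ forces $\log R - I(R) \le \log R^* \le \log R^* - I(R^*)$ for $R \le R^*$ by convexity of $I$ (this is the remark already made in the paper when discussing the volume fraction threshold), so the supremum over $R > 0$ is attained in $[R^*,\infty)$.

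The first main step is therefore a Varadhan-type computation: the sequence $(\bar X_n)$ satisfies an LDP with good convex rate function $I$, and the functional $x \mapsto \log x$ is continuous on $(0,\infty)$. If $\log(\cdot)$ were bounded and continuous, Varadhan's lemma would immediately give $\frac{1}{n}\log\EE[e^{n\log \bar X_n}] \to \sup_{x}(\log x - I(x))$. The hard part is the lack of boundedness at both ends of $(0,\infty)$, which is exactly what the moment hypothesis \eqref{cond1} is there to control. So the second step is to verify the moment/tail condition under which Varadhan's lemma applies. For the upper tail, condition \eqref{cond1} states $\limsup_n \EE[(\bar X_n)^{\gamma n}]^{1/n} < \infty$ for some $\gamma > 1$; writing $\gamma \log x$ in place of $\log x$, this is precisely the uniform integrability / superexponential moment condition (the "$(\gamma)$-moment condition" in Dembo--Zeitouni, e.g. the hypothesis of their Lemma 4.3.8 / Theorem 4.3.1) that upgrades the Laplace-type upper bound. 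For the lower end, $\log x \to -\infty$ as $x \downarrow 0$, so $e^{n\log x}$ is bounded above near $0$ and contributes nothing to the upper bound; for the lower bound one only needs the LDP lower bound applied to neighbourhoods of a near-maximizer $R_v$ of $\log R - I(R)$ in the interior of the domain of $I$, which is legitimate since (as established in the ordering discussion) $R_v$ is finite and, being a point where $1/R_v \in [I'_-(R_v), I'_+(R_v)]$, lies in the interior of $\mathrm{dom}\, I$.

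Concretely, the steps in order are: (i) invoke $\EE_n^0[d_n^-] = \EE_n^0[d_n^+]$ and the Slivnyak computation to reduce to analyzing $\frac1n\log\EE[e^{n\log\bar X_n}]$; (ii) prove the upper bound $\limsup_n \frac1n\log\EE[e^{n\log\bar X_n}] \le \sup_x(\log x - I(x))$ by splitting the expectation over $\{\bar X_n \le M\}$ and $\{\bar X_n > M\}$, handling the bounded region with the LDP upper bound for the (continuous, bounded-above on $(0,M]$) exponential weight and handling the tail $\{\bar X_n > M\}$ via Hölder with exponent $\gamma$ and \eqref{cond1}, then letting $M \to \infty$; (iii) prove the lower bound $\liminf_n \frac1n\log\EE[e^{n\log\bar X_n}] \ge \log R - I(R)$ for each $R$ in the interior of $\mathrm{dom}\, I$ by restricting the expectation to a small ball around $R$ and using the LDP lower bound, then optimizing over such $R$ and using lower semicontinuity plus the interiority of the maximizer $R_v$ to recover the full supremum; (iv) combine with the $e^{n(\rho + \frac12\log(2\pi e))+o(n)}$ prefactor. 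I expect step (ii), the tail truncation argument showing the upper tail of $\bar X_n$ does not overwhelm the Laplace principle, to be the only genuinely delicate point; everything else is a routine application of the LDP bounds and the Stirling asymptotics $\frac{(\pi n)^{n/2}}{\Gamma(n/2+1)} = e^{\frac n2 \log(2\pi e) + o(n)}$.
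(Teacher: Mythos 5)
Your proof takes essentially the same route as the paper's: reduce $\EE_n^0[d_n^-]$ to $\EE[e^{n\log\bar X_n}]$ via the mass transport identity and Slivnyak's theorem, then apply Varadhan's lemma with condition \eqref{cond1} providing the required $\gamma$-moment hypothesis, and restrict the supremum to $R \ge R^*$ using $I(R^*)=0$. The only difference is that you unpack the proof of Varadhan's upper bound (truncation plus H\"older) that the paper delegates to the citation of \cite{DZ}; note also that the restriction of the sup to $R \ge R^*$ needs only $I \ge 0$ and $I(R^*)=0$, not convexity.
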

\begin{proof}
From what precedes,
\begin{eqnarray*}
\EE^0_n[d_n^-] =
e^{n (\rho +\frac{n}{2} \log ( 2 \pi e)) + o(n)}
\EE[e^{n \log \bar X_n}]~.
\end{eqnarray*}
It follows from Assumption (\ref{cond1})
and from Varadhan's lemma \cite{DZ} that
$$ \lim_{n\to \infty}  \frac 1 n 
\EE[e^{n \log \bar X_n}]
=  \sup_{R \ge R^*} (\log R - I(R)) ~,$$
where we have used the observation that $\log R - I(R) \le \log R^*$ for 
$0 \le R \le R^*$. This completes the proof.
\end{proof}

Now,  from the independent thinning theorem \cite[Exercise 11.3.1]{DVJ2},
the distribution of $d_n^-$ is Poisson.
Thus
\begin{equation}		\label{Prob.From.Count}
\PP( \uzero \in \mathcal{C}_n) = 1 - \exp( - \EE^0_n[d_n^-])\le
\EE^0_n[d_n^-]~.
\end{equation}
For $\rho <\tau_v$, we see from eqn.
 (\ref{Asymp.Count})  that 
\[
\EE[ d_n^-] \to 0 \mbox{ as $n \to \infty$}~,
\]
which implies that
\[
\PP( \uzero \in \mathcal{C}_n) \to 0 \mbox{ as $n \to \infty$}.
\]
In addition, for all $\alpha < 1$ we have 
\[
1 - \alpha x \ge \exp( -x) \ge 1 - x \mbox{ for all sufficiently small $x > 0$}~.
\]
Thus, from eqn. (\ref{Prob.From.Count}) we get that, for all $\alpha < 1$ and 
$n$ sufficiently large 
\[
\frac{1}{n} \log \EE^0_n[ d_n^-]  \ge
\frac{1}{n} \log \PP( \uzero \in \mathcal{C}_n) \ge \frac{1}{n} \log \EE^0_n[ d_n^-] 
+ \frac{1}{n} \log \alpha~.
\]
Thus we have
\begin{equation}		\label{Limit.Below}
\lim_{n \to \infty} \frac{1}{n} \log P( \uzero \in \mathcal{C}_n)
= \rho + \frac{1}{2} \log ( 2 \pi e) + \sup_{R \ge R^*} (\log R - I(R))~,
\end{equation}
which concludes the proof of eqn. (\ref{eq:vfexp}).

Suppose now that
$\rho > \tau_v$
Since 
\begin{equation}		\label{Prob.From.Count.2}
\PP( \uzero \notin \mathcal{C}_n) = \exp( - \EE^0_n[d_n^-])~,
\end{equation}
we then immediately get eqn. (\ref{Limit.Above}).

\subsection{Volume  Fraction 
Threshold for Deterministic Radii}

The proof above also works for the case of deterministic radii 
(equal to $R^*_n \sqrt{n}$ in dimension $n$ with $R^*_n \to R^*$ as
$n \to \infty$). The only change needed is to replace 
$\EE[ e^{n \log \bar{X}_n}]$ by  $e^{n \log R^*_n}$.
Then eqn.   (\ref{Asymp.Count})  is replaced by
\[
\frac{1}{n} \log \EE^0_n[ d_n^-] \to 
\rho + \frac{1}{2} \log ( 2 \pi e) + \log R^*~,
\]
and we learn that if
\[
\rho < - \frac{1}{2} \log ( 2 \pi e) - \log R^*
\]
then
\[
\lim_{n \to \infty} \frac{1}{n} \log \PP( \uzero \in \mathcal{C}_n)
= \rho + \frac{1}{2} \log ( 2 \pi e) + \log R^*~,
\]
which replaces eqn. (\ref{Limit.Below}), while if
\[
\rho > - \frac{1}{2} \log ( 2 \pi e) - \log R^*
\]
then
\[
\lim_{n \to \infty} \frac{1}{n} \log (-\log \PP( \uzero \notin \mathcal{C}_n)) 
= \rho + \frac{1}{2} \log ( 2 \pi e) + \log R^*~,
\]
which replaces eqn. (\ref{Limit.Above}).
\subsection{Proof of Theorem \ref{thm:deg}}		\label{ss.Boolean.Sketch.RR}

In each dimension $n$, consider the stationary version of the process.
Given $s > 0$, let $N_n(\uzero, s)$ denote the number of points whose
balls intersect the ball of radius $s \sqrt{n}$ centered at the origin
$\uzero$ in $\mathbb{R}^n$.  Then 
\begin{eqnarray*}
\EE[N_n(\uzero,s)] &\stackrel{(a)}{=}& 
e^{n \rho_n} \EE[ \frac{(\pi n)^{\frac{n}{2}}}{\Gamma(\frac{n}{2} + 1)}
(\bar{X}_n + s)^n]\\
&=& e^{n \rho_n} \EE[e^{\frac{n}{2} \log ( 2 \pi e) + o(n)} e^{n \log (\bar{X}_n + s)}]\\
&=& e^{n (\rho 
+ \frac{1}{2} \log (2 \pi e)) + o(n)} \EE[ e^{n \log (\bar{X}_n + s)}]~.
\end{eqnarray*}
 Step (a) again follows from Slivnyak's theorem
and the mass transport principle
applied to the directed graph 
with an edge from $T_n^{(k)}$ to $T_n^{(l)}$, $l\ne k$ if
$T_n^{(l)}\in B(T_n^{(k)},(X_n^{(k)}+s) \sqrt{n})$. 

Consider now the Palm version of the point process.
Recall that, from Slivnyak's theorem, ignoring the point at $\uzero$ 
and its mark leaves behind a stationary version of the 
marked point process, which is called the  reduced process. 
We therefore have,
\[
\EE^0_n[D_n] = \EE[ \EE[ N_n(\uzero, S) |S]]~,
\]
where $S \stackrel{d}{=} \bar{X}_n$ and is independent
of the reduced process.

Thus 
\[
\EE^0_n[D_n] = e^{n (\rho 
+ \frac{1}{2} \log (2 \pi e)) + o(n)} \EE[ e^{n \log (\bar{X}_n + \bar{X}^\prime_n)}]~,
\]
with $\bar{X}_n$ and $\bar{X}^\prime_n$ being i.i.d. 
We now use the fact that 
$\bar{X}_n + \bar{X}^\prime_n$
satisfies an LDP with good convex rate
function $2I(\frac u 2)$
to derive (\ref{Thresh.Boolean.Old}) from Varadhan's lemma.
For this, we have to check that if $\bar X'_n$ is a
variable with the same law as $\bar X_n$ and such that
$\bar X'_n$ and $\bar X_n$ are independent, then
\begin{eqnarray}
\limsup_{n\to \infty} \EE [ (\bar X_n+ \bar X'_n)^{\gamma n}]^{\frac 1 n} < \infty
\quad \mbox{for some $\gamma>1$}.
\label{cond2}
\end{eqnarray}
But this can be obtained from the following convexity argument
\begin{eqnarray*}
\EE [ (\bar X_n+ \bar X'_n)^{\gamma n}] & \le &
2^{\gamma n} \EE [(\bar X_n)^{\gamma n}]~,
\end{eqnarray*}
which implies that
\begin{eqnarray*}
\EE [ (\bar X_n+ \bar X'_n)^{\gamma n}]^{\frac 1 n} & \le &
2^{\gamma } \EE [(\bar X_n)^{\gamma n}]^{\frac 1 n}.
\end{eqnarray*}
Hence eqn. (\ref{cond2}) follows from eqn. (\ref{cond1}).


This completes the proof of the results on the 
 degree  threshold.

\subsection{ Degree  Threshold for Deterministic Radii}

The proof given above also works for the case of deterministic radii. 
The changes needed are analogous to those that were 
needed in the case of the volume  fraction  threshold.
\subsection{Proof of Theorem \ref{thm:perc}}

We need to prove two things: (1) if $\rho < \tau_p$ then 
$\lim_{n \to \infty} \theta_n = 0$ and (2) if $\rho > \tau_p$, then 
$\liminf_{n \to \infty} \theta_n > 0$.

To prove (1) we follow the lines of the proof in Section
\ref{ss.Boolean.Sketch.RR}.
Rather than considering
$\EE^0_n[D_n]$, we consider
$\PP^0_n(D_n > 0)$. As in Section \ref{ss.Boolean.Sketch.RR},
we write
\[
\PP^0_n(D_n > 0) = \EE^0_n [ \PP_n^0( D_n > 0 | S)]~,
\]
where $S \sqrt{n}$ now refers to the radius of the ball of the point at the 
origin. 
Now, if 
\[
\rho <  - \frac{1}{2} \log ( 2 \pi e) + \inf_{R \ge R^*} 
\left( I(R) - \log(R + R^*) \right)~,
\]
then, because $R_p$, as defined in Section \ref{ss.ordering} is finite, 
for sufficiently small $\epsilon > 0$ we also have
\[
\rho < - \frac{1}{2} \log ( 2 \pi e) + \inf_{R \ge R^*} 
\left( I(R) - \log(R + R^* + \epsilon) \right)~.
\]
On the event $\{ S \le R^* + \epsilon^\prime \}$, with
$\epsilon  > \epsilon^\prime > 0$, we have
\begin{eqnarray}
&&~ \lim_n \frac 1 n \log \left(\EE_n^0[D_n | S]\right)
\nonumber\\
&&~~~= \rho + \frac{1}{2} \log (2 \pi e) 
+ \sup_{R \ge R^* + S} \left( \log R - I( R - S) \right)\nonumber \\
&&~~~ = \rho + \frac{1}{2} \log (2 \pi e) 
+ \sup_{\tilde{R} \ge R^*} \left( \log( \tilde{R} + S) -  I(\tilde{R})
\right) \nonumber
\\
&&~~~ \le \inf_{R \ge R^*} 
\left( I(R) - \log(R + R^* + \epsilon) \right) + 
\sup_{\tilde{R} \ge R^*} \left( \log( \tilde{R} + S) - I(\tilde{R})
\right) \nonumber
\\
&&~~~ = \sup_{\tilde{R} \ge R^*} \left( \log( \tilde{R} + S) - I(\tilde{R}) \right)
- \sup_{\tilde{R} \ge R^*} \left( \log(\tilde{R} + R^* +\epsilon) - I(\tilde{R}) \right) \nonumber\\
&&~~~ < 0~.
\label{eq:argab}
\end{eqnarray}
We also have
\begin{eqnarray*}
\PP^0_n( D_n > 0) & = & \PP^0_n( D_n > 0 , S >  R^* + \epsilon^\prime  )\\
& & + \EE^0_n[ \PP^0_n(D_n > 0 | S) 1( S \le R^* + \epsilon^\prime)]\\
& \le & \PP (S >  R^* + \epsilon^\prime  )
+ \EE^0_n[ \EE^0_n[D_n  | S] 1( S \le R^* + \epsilon^\prime)]~.
\end{eqnarray*}
In the last expression, the first term has probability
asymptotically approaching $0$ as $n \to \infty$.
From eqn. (\ref{eq:argab}), for all $s$ in the integration interval,
the integrand in the second term tends pointwise to 0 as $n \to \infty$.
From this and dominated convergence, we conclude that
$\PP^0_n( D_n > 0 ) \to 0$ as $n \to \infty$, which proves (1).

We now prove (2),
assuming that 
\[
\rho > - \frac{1}{2} \log ( 2 \pi e) + \inf_{R \ge R^*} 
\left( I(R) - \log(R + R^* + \epsilon) \right)~.
\]
Let $R_p$, as defined in Section \ref{ss.ordering}, 
achieve the minimum in the definition of 
$\tau_p$ in eqn. (\ref{Thresh.Perc}). 
We need to distinguish between the two cases $R_p = R^*$ and 
$R_p > R^*$. 

Consider first the case $R_p = R^*$. Then 
\[
\rho = - \frac{1}{2} \log ( 2 \pi e) - \log(2 R^*) + \delta~,
\]
for some $\delta > 0$. This means that we can choose $\gamma > 0$ such that
\[
\rho > - \frac{1}{2} \log ( 2 \pi e) - \log(2 (R^* - \gamma)) + \frac{\delta}{2}~.
\]
For each dimension $n$ we consider the
thinned version of the reduced process, where we only retain the points whose
balls have radius at least $(R^* - \gamma) \sqrt{n}$.
We also consider only the event that the ball of
the point at the origin has radius at least $(R^* - \gamma) \sqrt{n}$.

Let $\tilde{\theta}_n$ denote the probability of percolation from the 
origin via its ball and through the balls of the 
thinned reduced point process, on the event that the ball of the origin has 
radius at least $(R^* - \gamma) \sqrt{n}$.
Since $\theta_n \ge \tilde{\theta}_n$, if we can show that
$\liminf_{n \to \infty} \tilde{\theta}_n > 0$, then we will
be done.

Let us now show that 
\begin{itemize}
\item[(a)] the probability that the ball of $\uzero$ has radius at least
$(R^*-\gamma)\sqrt{n}$ tends to 1 as $n$ tends to infinity;
\item[(b)] the probability that the number $J_n$ of balls
of the thinned point process intersecting the ball of $\uzero$
(with radius at least $(R^*-\gamma)\sqrt{n}$)
is positive tends to 1 as $n$ tends to infinity;
\item[(c)] the Boolean model with deterministic radii 
$(R^* - \gamma) \sqrt{n}$ for the points of the thinned reduced process percolates,
i.e. the associated percolation probability has a positive liminf.
\end{itemize}

Property (a) is immediate.
For proving (b), we show that
$\EE[J_n]$ tends to infinity with $n$, which will
complete the proof since $J_n$ is Poisson. 
The probability that the
$\bar X_n$ exceeds $(R^* - \gamma)\sqrt{n}$
is asymptotically $1$.
Hence, by arguments
similar to those used earlier,
$$ \lim_n \frac 1 n \EE[J_n] 
= \rho +\frac 1 2 \log(2\pi e) +\ln(2 (R^*-\gamma)) > \frac{\delta}{2}~,
$$
which completes the proof of (b).
For proving (c), we use the results in \cite{PenroseCP}.
The percolation threshold of the Boolean models with deterministic
radii $(R^* - \gamma)\sqrt{n}$ is 
$- \frac{1}{2} \log ( 2 \pi e) - \log( 2(R^* - \gamma))$.
Since the normalized logarithmic intensity of the thinned reduced process is 
still asymptotically $\rho$, and since this exceeds 
$- \frac{1}{2} \log ( 2 \pi e) - \log( 2(R^* - \gamma))$,
the proof of (c) follows from \cite{PenroseCP}.

The proof of the desired result, in the case $R_p = R^*$, now follows immediately
from (a), (b), and (c).

We next turn to the case $R_p > R^*$. Note that in this case we must have
$I(R_p) < \infty$. Since 
\[
\rho = - \frac{1}{2} \log ( 2 \pi e) + I(R_p) - \log(R_p + R^*) + \delta~,
\]
for some $\delta > 0$, we can choose $\gamma > 0$ such that 
$R^* < R_p - \gamma < R_p$ (which implies that $I(R_p - \gamma) < \infty$),
and such that 
\[
\rho > - \frac{1}{2} \log ( 2 \pi e) + I(R_p - \gamma) - \log(R_p + R^* - 2 \gamma) 
+ \frac{\delta}{2}~.
\]
For each dimension $n$ we consider the
thinned version of the reduced process, where we only retain the points whose
balls have radius at least $(R_p - \gamma) \sqrt{n}$.
We also consider only the event that the ball of
the point at the origin has radius at least $(R^* - \gamma) \sqrt{n}$.

Let $\tilde{\theta}_n$ denote the probability of percolation from the 
origin via its ball and through the balls of the 
thinned reduced point process, on the event that the ball of the origin has 
radius at least $(R^* - \gamma) \sqrt{n}$.
Since $\theta_n \ge \tilde{\theta}_n$, if we can show that
$\liminf_{n \to \infty} \tilde{\theta}_n > 0$, then we will
be done.

Let us now show that 
\begin{itemize}
\item[(a)] the probability that the ball of $\uzero$ has radius at least
$(R^*-\gamma)\sqrt{n}$ tends to 1 as $n$ tends to infinity;
\item[(b)] the probability that the number $J_n$ of balls
of the thinned point process intersecting the ball of $\uzero$
(with radius at least $(R^*-\gamma)\sqrt{n}$)
is positive tends to 1 as $n$ tends to infinity;
\item[(c)] the Boolean model with deterministic radii 
$(R_p - \gamma) \sqrt{n}$ for the points of the thinned reduced process percolates,
i.e. the associated percolation probability has a positive liminf.
\end{itemize}

Property (a) is immediate.
For proving (b), we show that
$\EE[J_n]$ tends to infinity with $n$, which will
complete the proof since $J_n$ is Poisson. 
The probability that the
$\bar X_n$ exceeds $(R_p - \epsilon)\sqrt{n}$
scales like $e^{-nI(R_p - \gamma) + o(n)}$.
Hence, by arguments
similar to those used earlier,
$$ \lim_n \frac 1 n \EE[J_n] 
= \rho +\frac 1 2 \log(2\pi e) - I(R_p - \gamma) + \ln(R_p + R^*- 2 \gamma)) > 
\frac{\delta}{2}~,
$$
which completes the proof of (b).
For proving (c), we use the results in \cite{PenroseCP}.
The percolation threshold of the Boolean models with deterministic
radii $(R_p - \gamma)\sqrt{n}$ is 
$- \frac{1}{2} \log ( 2 \pi e) - \log( 2(R_p - \gamma))$.
Since the normalized logarithmic intensity of the thinned reduced process is 
asymptotically $\rho - I(R_p - \gamma)$, and since this exceeds 
$- \frac{1}{2} \log ( 2 \pi e) - \log( R_p + R^* - 2 \gamma))$
(because $R_p > R^*$),
the proof of (c) follows from \cite{PenroseCP}.

The proof of the desired result, in the case $R_p > R^*$, now follows immediately
from (a), (b), and (c). This also completes the overall proof.


\subsection{Percolation 
Threshold for Deterministic Radii}

The proof of the percolation threshold in the case of deterministic radii (i.e. 
when the radii in dimension $n$ equal a constant $R^*_n \sqrt{n}$, with 
$R_n^* \to R^*$ as $n \to \infty$) can be completed in a much simpler way than 
the general proof. Since $\tau_p = \tau_d$ in the case of deterministic radii,
the absence of percolation when $\rho < \tau_p$ is an immediate consequence of 
the result proved earlier that $\PP(D_n > 0) \to 0$ as $n \to \infty$ when 
$\rho < \tau_d$. For the case $\rho > \tau_p$ the proof can be done in 
a way exactly as the general case where $R_p =R^*$ was handled above.

\subsection{Proof of Theorem \ref{thm:order}}
We first prove that $\tau_d\le \tau_p$.
By the convexity of the rate function $I(\cdot)$,
and because $I(R^*) = 0$, we have, for all $R\ge R^*$,
\[
2 I(\frac{R + R^*}{2}) \le  I(R)~.
\]
Hence, for all $R\ge R^*$,
\[
2 I(\frac{R + R^*}{2}) - \log(2 (\frac{R+R^*}{2})) 
\le  I(R) - \log(R + R^*)~.
\]
This establishes the result.

%
The fact that $\tau_p\le \tau_v$ immediately follows from
$$
\inf_{R \ge R^*} \left( I(R) - \log(R + R^*) \right) \le 
\inf_{R \ge R^*} \left( I(R) - \log(R) \right)~.
$$

\subsection{Proof of Theorem \ref{thm:order2}}
The fact that $R^*\le R_d$ is immediate from the definition of $R_d$.

We first prove that $R_d \le R_p$.
Recall that $R_d$ is uniquely defined by
\[
\frac{1}{2 R_d}\in
[I^\prime_-(R_d), I^\prime_+(R_d)]~,
\]
and $R_p$ is uniquely defined by 
\[
\frac{1}{R_p + R^*}\in 
[I^\prime_-(R_p), I^\prime_+(R_p)]~.
\]
Now 
\[
R_d > R_p \Longrightarrow 2 R_d > R_p + R^* \Longrightarrow 
\frac{1}{2R_d} < \frac{1}{R_p + R^*} \Longrightarrow R_d < R_p~.
\]
This contradiction implies that $R_d \le R_p$.

We next prove that $R_p \le R_v$. Here we also need to recall that
$R_v$ is uniquely defined by 
\[
\frac{1}{R_v} \in  [I^\prime_-(R_v), I^\prime_+(R_v)]~.
\]
Now
\[
R_p > R_v \Longrightarrow R_p + R^* > R_v \Longrightarrow 
\frac{1}{R_p + R^*} < \frac{1}{R_v} \Longrightarrow R_p < R_v~.
\]
This contradiction proves that $R_p \le R_v$.

We next prove that $R_v \le R_p + R^*$. For this, it suffices to 
observe the contradiction
\[
R_v > R_p + R^* \Longrightarrow \frac{1}{R_v} < \frac{1}{R_p+R^*} 
\Longrightarrow R_v < R_p \Longrightarrow R_v < R_p + R^*~.
\]

We finally prove that $R_p + R^* \le 2 R_d$. For this we observe the 
contradiction
\[
R_p + R^* > 2 R_d \Longrightarrow \frac{1}{R_p + R^*} < \frac{1}{2 R_d} 
\Longrightarrow R_p < R_d \Longrightarrow R_p + R^* < 2 R_d~.
\]

This completes the proof.


%
%
%

\section{Concluding Remarks}		\label{s.concluding}

\subsection{Connections with Error Exponents}
In this subsection we make some remarks about the connections between 
the concerns of this paper and the problem of error exponents in channel
coding over the additive white Gaussian noise channel
\cite[Section 7.4]{Gallager},
as discussed in \cite{AB} in the Poltyrev regime.

For all $n \ge 1$ and $k \ge 1$, 
let $W_n^{(i,k)}$, $n\ge i\ge 1$, denote an i.i.d. sequence of
Gaussian random variables, all centered and of variance $\sigma^2$.
Let $W_n^{(k)}$ denote the $n$-dimensional vector with coordinates
$W_n^{(i,k)}$, $n\ge i\ge 1$. Then $T_n^{(k)}+W_n^{(k)}$ 
belongs to the closed ball of center $T_n^{(k)}$ and radius 
$X_n^{(k)} \sqrt{n},$ with
\begin{equation}		\label{eq:GGrain}
X_n^{(k)} := \left(\frac 1 n 
\sum_{i=1}^n \left(W_n^{(i,k)}\right)^2\right)^{\frac 1 2} \stackrel{d}{=} \bar{X}_n~,
\end{equation}
where $\bar{X}_n$ denotes a random variable having the distribution of the 
normalized radius random variables in the preceding equation.
One can check that $(\bar{X}_n, n \ge 1)$ 
satisfy an LDP and all the assumptions listed above.
For each $\sigma^2 > 0$,
we call such a family of Boolean models (parametrized by $n \ge 1$, as usual)
the case with {\em Gaussian grains}.

For Shannon's channel coding problem in the Poltyrev regime, as considered in 
\cite{AB}, the focus is on the probability of error.
As a result, one only wants to associate those points in Euclidean
space that have a high probability of being of the type $T_n^{(k)} + W_n^{(k)}$
to the point $T_n^{(k)}$. 
Therefore one considers, instead of the Boolean model where 
a Gaussian grain is associated to each point, another Boolean model
where this Gaussian grain is replaced by an associated
{\em typicality region}, namely
the set
\[
\{ T_n^{(k)} + v ~:~ v \in \mathbb{R}^n,~
\| v \|_2 \le \sigma \sqrt{n} + \alpha_n \}~,
\]
where $\| v \|_2$ denotes the usual Euclidean length of $v$ and where
$0 < \alpha_n = O(\sqrt{n})$ are chosen such that 
\begin{eqnarray*}
&& ~ \frac{\alpha_n}{\sqrt{n}} \to 0 \mbox{ as $n \to \infty$};\\ 
&&P(\| W_n^{(k)} \| \le 
\sigma \sqrt{n} + \alpha_n ) \to 1 \mbox{ as $n \to \infty$ 
(for each $1 \le k \le n$)};\\ 
&& 
\frac{1}{n} \log \mbox{Vol} \{ v \in \mathbb{R}^n ~:~ 
\| v \|_2 \le \sigma \sqrt{n} + \alpha_n \} \to \frac{1}{2} \log (2 \pi e \sigma^2)
\mbox{ as $n \to \infty$}~.
\end{eqnarray*}
This now gives rise to a family of deterministic Boolean models
which will be referred to as the {\em truncated Gaussian grain} models below.
The {\em Poltyrev capacity} is the threshold for the asymptotic
logarithmic intensity of such a family of Boolean models. This threshold
is the asymptotic logarithmic intensity up to which it is possible
to make such an association
with asymptotically vanishing probability of error.
It is also the threshold up to which there is a vanishingly 
small probability that a point in Euclidean space
is covered by multiple truncated Gaussian grains, which
directly corresponds to what is called the volume 
fraction threshold in the present paper.

It turns out that 
the volume fraction threshold is smaller 
for Gaussian grains than
for truncated Gaussian grains, even though 
the normalized radii of the grains in the two models have the
same asymptotic limit $R^*$.
To see this, consider Gaussian grains with
per-coordinate variance $\sigma^2$,
as above. Then $\bar{X}_n^2$
is distributed as the average of $n$ i.i.d. squared Gaussian random variables 
of mean $0$ and variance $\sigma^2$, so we have 
$\bar{X}_n^2  \stackrel{\PP}{\to} \sigma^2$ as $n \to \infty$, which 
implies $\bar{X}_n \stackrel{\PP}{\to} \sigma$ as $n \to \infty$. This means
$E[ \bar{X}_n]$ (which is bounded above by $( E[ (\bar{X}_n)^2])^{\frac{1}{2}}
= \sigma$) converges to $\sigma$ as $n \to \infty$. This means
$R^* = \sigma$. The volume fraction threshold for deterministic grains with 
radius $R^* \sqrt{n}$ in dimension $n$ is then 
given by the R.H.S. of eqn. (\ref{eq:dettauv}).
The exponent of the 
growth rate in $n$ of the volume of each Gaussian grain is strictly bigger than 
this. That it is at least as big follows immediately from the convexity of 
the function $R \mapsto R^n$, defined for $R \ge 0$. To see the strict
inequality, first note
that the density of the radius of the Gaussian grain in dimension $n$ can be 
written as $g_n^\sigma(r)$, $r \ge 0$, where 
$g_n^\sigma(r) = \frac{1}{\sigma} g_n^1(\frac{r}{\sigma})$, with 
\[
g_1^\sigma(r) := \frac{n r^{n-1} e^{ - \frac{r^2}{2}}}
{2^{\frac{n}{2}} \Gamma(\frac{n}{2} + 1)}~,~~r \ge 0~,
\]
where $\Gamma(\cdot)$ denotes the standard Euler gamma function.
To figure out the asymptotic growth rate of the expected volume of a 
grain, we need to compute
\[
\lim_{n \to \infty} \frac{1}{n} \log \int_0^\infty V_n(1) r^n g_n^\sigma(r) dr~,
\]
where $V_n(1) := \frac{\pi^{\frac{n}{2}}}{\Gamma(\frac{n}{2} + 1)}$ denotes
the volume of the ball of unit radius in $\mathbb{R}^n$.

It is convenient to reparametrize $r$ as $v \sigma \sqrt{n}$, giving
\[
g_n^\sigma(v \sigma \sqrt{n}) = e^{- n( \frac{v^2}{2} - \frac{1}{2} - \log(v) + o(1))}~.
\]
Thus
\begin{eqnarray*}
&~& \lim_{n \to \infty} \frac{1}{n} \log \int_0^\infty 
\frac{\pi^{\frac{n}{2}}}{\Gamma(\frac{n}{2} + 1)} r^n g_n^\sigma(r) dr\\
~~~~~~~~~~~~~~~~ &=& \lim_{n \to \infty} \frac{1}{n} \log 
\left( (2 \pi e \sigma)^{\frac{n}{2}} \int_0^\infty e^{ n (\log v + o(1))}
e^{- n( \frac{v^2}{2} - \frac{1}{2} - \log(v) + o(1))} dv \right)\\
~~~~~~~~~~~~~~~~ &\stackrel{(a)}{=}& \frac{1}{2} \log ( 2 \pi e \sigma^2) + \frac{1}{2} ( \log 4 - 1)\\
~~~~~~~~~~~~~~~~ &>& \frac{1}{2} \log ( 2 \pi e \sigma^2)~,
\end{eqnarray*}
where step (a) comes from Laplace's principle that the asymptotics is 
controlled by the exponential term in the integrand with the largest 
exponent. Laplace's principle as just applied is only a heuristic, of course,
but this calculation makes the point that 
the volume fraction threshold for
Gaussian grains is strictly smaller than the volume fraction threshold for the 
truncated Gaussian grains (i.e. the Poltyrev threshold).

\subsection{Thresholds in the Gaussian Grain Case}
It is interesting to consider the case of Gaussian grains
in detail as an illustration of the general results in this paper,
and we turn to this next, giving,
in the process, a rigorous derivation of the volume fraction
threshold for this case as discussed in the preceding subsection.

We first need to determine the large deviations rate function
for the sequence $(\bar{X}_n, n \ge 1)$,
with each $\bar{X}_n$ defined as in eqn. (\ref{eq:GGrain}).
Here we think of $\sigma^2 > 0$ as being fixed.
It is easy to do this by first observing that $(\bar{X}^2_n, n \ge 1)$ 
satisfies the large deviations principle with
rate function $J(\cdot)$ given by
\[ 
J(z) = \begin{cases}
\frac{z}{2 \sigma^2} - \frac{1}{2} - \frac{1}{2} \log \frac{z}{\sigma^2} &
\mbox{ if $z > 0$}\\
\infty & \mbox{ otherwise}
\end{cases}~,
\]
which follows from the fact that if $Z_n$ is
a Gaussian random variable with mean zero and variance $\sigma^2$ then 
\[
\log E[ e^{\theta Z_n^2}] = 
\begin{cases}
 -\frac{1}{2} \log (1 - 2 \theta \sigma^2) & \mbox{ if $\theta < \frac{1}{2 \sigma^2}$} \\
\infty & \mbox{ otherwise}~.
\end{cases}
\]
The contraction principle \cite[Thm. 4.2.1]{DZ} then gives 
the rate function of the sequence $(\bar{X}_n, n \ge 1)$
as being $I(\cdot)$, where
\[
I(R) = \begin{cases}
\frac{R^2}{2 \sigma^2} - \frac{1}{2} - \frac{1}{2} \log \frac{R^2}{\sigma^2}
& \mbox{ if $R > 0$}\\
\infty & \mbox{ otherwise}~.
\end{cases}
\]
Another way to see this is to note that the convex conjugate
dual of this function is the function 
\[
\Lambda(\theta) = \frac{\theta \sigma}{2} 
\left( \frac{\theta \sigma + \sqrt{ \theta^2 \sigma^2 + 4}}{2} \right) 
+ \log \left( \frac{\theta \sigma + \sqrt{ \theta^2 \sigma^2 + 4}}{2} \right)~,
\]
and to check that 
\begin{eqnarray*}
\Lambda(\theta) &=& \lim_{n \to \infty} \frac{1}{n} \log E[ e^{n \theta \bar{X}_n}]\\
&=& \lim_{n \to \infty} \frac{1}{n} \log 
\int_0^\infty e^{\sqrt{n} \theta r} g_n^\sigma(r) dr\\
&=& \lim_{n \to \infty} \frac{1}{n} \log 
\int_0^\infty e^{ n \left( \theta v \sigma - \frac{v^2}{2} + \frac{1}{2} + \log v + o(1) 
\right)} dv~,
\end{eqnarray*}
by the use of Laplace's principle in the last expression on the right hand side.

The volume fraction threshold for the case of Gaussian grains (with
$\sigma^2 > 0$ being fixed) it then given by finding the solution
$R _v \ge \sigma$ to the equality
\[
\frac{R}{\sigma^2} - \frac{1}{R} = \frac{1}{R}~.
\]
There is a unique solution to this equation, namely 
$R_v = \sigma \sqrt{2}$, and this turns out to satisfy
$R_v \ge \sigma$, as it should. Here $\sigma$
is playing the role of $R^*$ in the general theory.
Substituting back into the formula
\[
\tau_v = - \frac{1}{2} \log( 2 \pi e) + I(R_v) - \log(R_v)
\]
for the volume fraction threshold gives
\[
\tau_v = - \frac{1}{2} \log(2 \pi e \sigma^2) - \frac{1}{2} ( \log 4 - 1)~.
\]
This is the announced rigorous derivation of the formula that was found 
above by the heuristic application of Laplace's principle.

The degree threshold is given by finding the solution $R _d \ge \sigma$
to the equality
\[
\frac{R}{\sigma^2} - \frac{1}{R} = \frac{1}{2 R}~.
\]
There is a unique solution to this equation, namely 
$R_d = \sigma \sqrt{\frac{3}{2}}$, and this turns out
to satisfy $R_d \ge \sigma$,
as it should. Substituting back
into the formula
\[
\tau_d = - \frac{1}{2} \log( 2 \pi e) + 2 I(R_d) - \log(2 R_d)
\]
for the degree threshold gives
\[
\tau_d = - \frac{1}{2} \log(2 \pi e \sigma^2) - \frac{1}{2} ( \log \frac{27}{2} - 1)~.
\]

The percolation threshold is given by finding the solution $R _p \ge \sigma$
to the equality
\[
\frac{R}{\sigma^2} - \frac{1}{R} = \frac{1}{R + \sigma}~.
\]
There is a unique solution to this equation, namely 
$R_p = \sigma c$, where $c$ is the unique root of the equation
\[
c^3 + c^2 - 2 c -1 = 0,
\]
which satisfies $c \ge 0$. In fact, this root satisfies $c > 1$.
\footnote{That there is a unique such root and that it satisfies $c > 1$
can be verified by noting that the expression on the left hand
side of this equation
equals $-1$ at $c =0$ and at $c =1$ and goes to $\infty$
as $c \to \infty$ and, further,
the derivative in $c$ of the expression  is
$3 c^2 + 2 c - 2$, which equals $-2$ at $c =0$ and is a convex function.}
Substituting back
into the formula
\[
\tau_p = - \frac{1}{2} \log( 2 \pi e) + I(R_p) - \log(R_p + \sigma)
\]
for the percolation threshold gives
\[
\tau_p = - \frac{1}{2} \log(2 \pi e \sigma^2) 
- \frac{1}{2} ( \log (c^2 (1 + c)^2) - c^2 + 1)~.
\]

Numerical evaluation of $c$ gives $1.2469796 < c < 1.2469797$.
This approximation suffices to verify that
\[
\log(\frac{27}{2}) - 1 > \log (c^2 (1 + c)^2) - c^2 + 1 > \log(4) - 1~,
\]
which confirms that 
$
\tau_d \le \tau_p \le \tau_v
$
in the case of Gaussian grains, as required by Theorem \ref{thm:order}.
This approximation also suffices to verify that
\[
1 < \sqrt{\frac{3}{2}} < c < \sqrt{2} < 1 + c < \sqrt{6}~,
\]
which confirms that
\[
R^*\le R_d \le R_p \le R_v \le R_p + R^* \le 2 R_d~,
\]
in the case of Gaussian grains,
as required by Theorem \ref{thm:order2}.

Returning to the Boolean model with truncated Gaussian grains
discussed in the last subsection,
since, for every $\sigma^2 > 0$, this family of Boolean models 
is a deterministic model
with $R^* = \sigma$, the rate function for this model
satisfies
\[
I(\sigma) = 0,\mbox{ and } I(R) = \infty \mbox{ for all $R \neq \sigma$}~.
\]
Thus, in this case the deterministic threshold equals the percolation threshold,
and both are $\log 2$ below the volume fraction threshold.
It is interesting to note, as observed in \cite{P94} and \cite {AB},
that this threshold also has a meaning; it is the threshold
for the asymptotic logarithmic intensity up to which the
truncated Gaussian grain of any given point of the Poisson 
process is so small that with probability asymptotically equal
to $1$ it does not meet the grain of any other point of the Poisson process.
This feature, which relates to a study of pairwise conflict in decoding between
two codewords, is central to Gallager's analysis of 
error exponents in the power constrained channels that are of interest to 
engineers; for more details see Section 7.4 of \cite{Gallager}
and in particular the study there of what is called Gallager's $E_0$ function.
\vspace{-.5cm}
\section*{Acknowledgements}
The research of the first author was supported by the 
ARO MURI grant W911NF- 08-1-0233, Tools for the
Analysis and Design of Complex Multi-Scale Networks, the 
NSF grants CNS-0910702 and ECCS-1343398, and the NSF Science
\& Technology Center grant CCF-0939370, Science of Information.
This work of the second author was supported by an award from
the Simons Foundation (\# 197982 to The University of Texas
at Austin).
\vspace{-.5cm}


\begin{thebibliography}{99}
\bibitem{AB}
V. Anantharam and F. Baccelli,
Capacity and Error Exponents
of Stationary Point Processes under Random Additive
Displacements, to appear in {\em Advances in Applied Probability}.
\bibitem{DZ}
A. Dembo and O. Zeitouni, {\em Large Deviation Techniques and Applications},
Jones and Bartlett, Boston, 1993.
\bibitem{DVJ2}
D. J. Daley and  D. Vere-Jones, {\em An Introduction to the Theory of
Point Processes}, Vol. 2, Second Edition, Springer, 2008.
\bibitem{Gallager}
R. G. Gallager, {\em Information Theory and Reliable Communications},
John Wiley \& Sons, 1968.
\bibitem{Gouere}
J.-B. Gou\'er\'e and R. Marchand,
Continuum percolation in high dimensions,
http://arxiv.org/abs/1108.6133, 2011.
\bibitem{Last}
G. Last and H. Thorisson, Invariant transports of stationary
random measures and mass-stationarity,
The Annals of Probability, Pages 790--813, Vol. 37, Number 2,
2009.
\bibitem{MeesterRoyBook}
R. Meester and R. Roy, {\em Continuum Percolation}. Cambridge University 
Press, 1996.
\bibitem{PenroseCP}
M. D. Penrose, Continuum Percolation and Euclidean Minimal Spanning Trees in 
High Dimensions. {\em The Annals of Applied Probability}, Vol. 6, No. 2, pp. 528 -544,
1996.
\bibitem{P94}
G. Poltyrev,
``On Coding Without Restrictions for the AWGN Channel".
{\em IEEE Trans. on Inform. Theory},
Vol. 40, No. 2, pp. 409-417, Mar. 1994.
\bibitem{Rockafellar}
R. T. Rockafellar,
{\em Convex Analysis}, Princeton University Press, 1972.
\bibitem{SW}
R. Schneider and W. Weil,
{\em Stochastic and Integral Geometry}, Springer Verlag, 2008.
\bibitem{Varadhan} 
S.R.S. Varadhan,
{\em Large Deviations and Applications},
SIAM, 1984.
\end{thebibliography}
\end{document}